   \def\MR#1{}
\newcommand{\abbr}[1]{{\sc\lowercase{#1}}}
\numberwithin{equation}{section}
\newtheorem{theorem}{Theorem}[section]
\newtheorem{conjecture}[theorem]{Conjecture}
\newtheorem{definition}[theorem]{Definition}
\newtheorem{lemma}[theorem]{Lemma}
\newtheorem{proposition}[theorem]{Proposition}
\newtheorem{remark}[theorem]{Remark}
\newcommand{\E}{\mathbb{E}}
\newcommand{\G}{\mathbb{G}}
\newcommand{\N}{\mathbb{N}}
	\renewcommand{\P}{\mathbb{P}}
\newcommand{\R}{\mathbb{R}}
\newcommand{\T}{\mathbb{T}}
\newcommand{\Z}{\mathbb{Z}}
\newcommand{\wh}{\widehat}
\newcommand{\wt}{\widetilde}
\newcommand{\ovl}{\overline}
\newcommand{\ep}{\epsilon}
\newcommand{\cE}{\mathcal{E}}
\newcommand{\cF}{\mathcal{F}}
\newcommand{\cG}{\mathcal{G}}
\newcommand{\cH}{\mathcal{H}}
\newcommand{\cN}{\mathcal{N}}
\newcommand{\bS}{\mathcal{S}}
\newcommand{\cU}{\mathbf{U}}
\newcommand{\bx}{\mathbf{x}}
\newcommand{\bX}{\mathbf{X}}
\newcommand{\bv}{\mathbf{v}}
\begin{document}

\title{The Kuramoto model on dynamic random graphs}


\author[P. Groisman]{Pablo Groisman}
\address{Departamento de Matem\'atica\hfill\break \indent Facultad de Ciencias Exactas y Naturales\hfill\break \indent Universidad de Buenos Aires\hfill\break \indent IMAS-UBA-CONICET\hfill\break \indent Buenos Aires, Argentina}
\email{pgroisma@dm.uba.ar}

\author[R. Huang]{Ruojun Huang}
\address{Faculty of Mathematics and Computer Science\hfill\break \indent University of M\"unster, Germany}
\email{ruojun.huang@uni-muenster.de}

\author[H. Vivas]{Hern\'an Vivas}
\address{Centro Marplatense de Investigaciones Matem\'aticas/Conicet}
\address{De\'an Funes 3350, 7600, Mar del Plata, Argentina}
\email{havivas@mdp.edu.ar}


\keywords{}

\begin{abstract}

We propose a Kuramoto model of coupled oscillators on a time-varying graph, whose dynamics are dictated by a Markov process in the space of graphs. The simplest representative is considering a base graph and then the subgraph determined by $N$ independent random walks on the underlying graph. We prove a synchronization result for solutions starting from a phase-cohesive set independent of the speed of the random walkers, an averaging principle and a global synchronization result with high probability for sufficiently fast processes. We also consider Kuramoto oscillators in a dynamical version of the Random Conductance Model.

\end{abstract}

\maketitle

\section{Introduction and main results}

Synchronization of systems of coupled oscillators is a phenomenon that has attracted the mathematical and scientific communities for centuries. On one hand, such problems have an intrinsic mathematical interest, in part due to their complexity and the rich variety of mathematical questions that they entail. On the other hand, they are useful models for understanding a wide range of physical and biological phenomena \cites{mirollo1990synchronization, winfree1967biological, acebron2005kuramoto, bullo2020lectures, arenas2008synchronization, dorflerSurvey, strogatz2004sync}. 

One of the most popular models for describing synchronization of a system of $N$ coupled oscillators is the so called Kuramoto model, given by the ODE system
\begin{equation}
\label{kuramoto.complete}
\frac{d}{dt}\theta_i=\omega_i+\frac{K}{N}\sum_{j=1}^N\sin(\theta_j-\theta_i), \quad i=1,\ldots,N. 
\end{equation}
Here $\theta_i$ represents the phase of the $i-$th oscillator, $\omega_i$ its natural frequency and the coupling constant $K$ accounts for the strength of the coupling between any two oscillators. Note that in \eqref{kuramoto.complete} every oscillator is coupled with every other oscillator and with the same coupling constant.

With origins in the study of chemical reactions and the behavior of biological populations with oscillatory features, \cites{kuramoto1975self,kuramoto1984chemical}, the Kuramoto model proved to be applicable in the description of phenomena in areas as varied as neuroscience \cites{cumin2007generalising,breakspear2010generative}; superconductors theory \cite{wiesenfeld1996synchronization}; the beating rhythm of pacemaker cells in human hearts \cite{peskin1975mathematical} and the spontaneous flashing of populations of fireflies \cite{mirollo1990synchronization}. The reader is referred to  the surveys  \cite{dorfler2013synchronization,acebron2005kuramoto,rodrigues2016kuramoto} and the references therein for a more complete picture of the available advances on the topic. 



Equation \eqref{kuramoto.complete} represents a mean field model and, as such, it can not reproduce all the subtleties of specific networks with particular topologies that occur in reality. Then, it is natural to consider more general versions of \eqref{kuramoto.complete} where the underlying connectivity structure is given by different kinds of graphs. For an undirected graph $\mathbb G$ with set of vertices $V$ and edges $E$, the Kuramoto model \eqref{kuramoto.complete} on $\mathbb G$ is written
\begin{equation}
\label{kuramoto.graph}
\frac{d}{dt}\theta_u=\omega_u+\frac{K}{|V|}\sum_{v\colon \{u,v\}\in E}\sin(\theta_v-\theta_u), \quad u\in V
\end{equation}
where $|V|$ is the cardinality of $V$.

A natural question is if, for a given graph $\mathbb G$ and $\omega_i=\omega_j$ for every $1\le i,j\le N$,  the all-in-phase state is the unique stable equilibrium for \eqref{kuramoto.graph} or on the contrary, there are other stable equilibria.

One of the key issues to address in this setting is the relationship between the topology (and in particular the connectivity) of the graph that represents the network of oscillators and the achievement of phase synchronization.
This relation is not a simple one and, for instance, more connectivity does not necessarily imply a higher tendency towards synchronization nor the other way around \cites{wiley2006size,taylor2012there}.

Remarkably, in \cite{taylor2012there,ling2019landscape, kassabov2021sufficiently} it is proved that for highly connected graphs (each node is connected to a fraction of the nodes of size at least $\mu$) the all-in-phase state is the only stable equilibrium when $\mu$ is sufficiently large, implying that for almost all initial conditions the system converges to this equilibrium. In \cite{kassabov2021sufficiently} it is shown that $\mu > 0.75$ is enough.

On the other hand, little connectivity can give rise to stable states which are not all-in-phase. As an example, it is well known that rings of any size support a twisted state, sometimes called the Mexican wave; this state can be easily constructed explicitly and it is straightforward to prove its stability, meaning that there is a set of initial conditions with positive Lebesgue measure that converge to the twisted state. This phenomenology can be extended to Wiley-Strogatz-Girvan (WSG) networks \cite{wiley2006size} that are constructed by linking each node on a cycle of $n$ nodes with its $k$-nearest neighbors, for $k\le 0.34n$ \cite{ling2019landscape}.

Furthermore, Canale and Monz\'on \cite{canale2008almost, canale2015exotic} constructed a sequence of networks from the lexicographic product of a WSG network together with complete graphs to exhibit networks that are highly connected ($\mu = (15/22)^-)$ but do not globally synchronize. This was improved in \cite{yoneda2021lower} to $0.6838\dots$

From a different perspective, \cite{bhamidi2019weakly, coppini2020law, delattre2016note, luccon2020quenched} give a rigorous mathematical treatment to obtain the mean-field limit of the system when the number of oscillators tend to infinity in a stochastic situation. In \cite{oliveira2018interacting, coppini2022long} the authors are able to push this analysis for times large enough to obtain large deviations results and conclusions about synchronization (which does not follow directly from the scaling limit in compact time intervals). See also \cite{chiba2016mean} for scaling limits in a deterministic setting and in a different regime \cite{medvedev2018continuum}. Related models have been considered in the Ph.D. Thesis by Yu \cite{HBY}.

We remark that there is an extensive literature studying Kuramoto model rigorously from a probabilistic perspective. A non-extensive list includes \cite{bertini2010dynamical, bertini2014synchronization, giacomin2012global, ling2019landscape, ling2020critical}. Although we have a different setting in mind, we share with them that standpoint.

The general picture to understand whether for a given graph, a system of identical Kuramoto oscillators synchronizes or admits a twisted stable equilibrium is far from being completely understood. Despite some light shed by these remarkable articles we still do not have an answer for the majority of graphs below the connectivity threshold.

Moreover, up to our knowledge, all the examples constructed to find sparse graphs that do synchronize and dense graphs that do not are not generic in the sense that the results obtained in most of the cases have not been shown to be robust under small perturbations of the graphs. 

In view of this, it is important to understand this issue for different models of random graphs since in these cases one obtains results that hold with high probability and hence, they represent generic behaviors. Those are the ones that we can observe in nature. 

In particular, it is interesting to understand the behavior of the system for large times in time-dependent networks since they provide, among other things, a framework to consider small perturbations of a given graph by allowing the edges to disappear for short time intervals. They also include the possibility to model networks that evolve with time, which is the case in many real world phenomena. A natural question that pops up in this context is weather the fact that the network changes with time leads to global synchronization.

The mathematical study of synchronization in Kuramoto model on random graphs can be traced back to \cite{ling2019landscape}, where the authors proved that for highly connected Erd\H{o}s-R\'enyi graphs we do have global synchronization with high probability. The threshold for synchronization obtained in \cite{ling2019landscape} is $p \ge Cn^{-1/3}\log n$. This has been improved in \cite{kassabov2022global} to $p \gg \log^2n/n$ and later on in \cite{Abdalla2022} to $p \ge c \log n/n$, with $c>1$, which coincides with the connectivity threshold $\log n/n$.


In this note we propose a Kuramoto model of coupled oscillators on a random graph that changes with time according to a Markovian dynamics. As we will see, one of the possible dynamics that we will consider is inspired by the representation of fireflies synchronizing their flashings (as observed e.g. in Southeast Asia) while changing their locations, but the introduction of dynamic topologies is interesting for many other phenomena in which the interactions between different oscillators are not constant in time. In fact, a large number of systems in physics, biology, engineering and social science exhibit a time-varying structure \cite{Moreau, Multi_Review}. 

Synchronization in systems of mobile agents has been widely considered in the physics literature (see \cite{Multi_Review} and references therein), inspired mainly by transportation and mobility systems, mobile phone networks, multi-agent robotics, and epidemic modeling.

Despite of the large body of literature available, rigorous mathematical results regarding the Kuramoto model on \emph{dynamic} graphs -that is when the topology of the underlying graph changes with time- seem to be absent.

We remark that both the introduction of randomness and the time-dependent connections complicate substantially the stability analysis. Obtaining optimal results for fixed velocities seems a rather hard issue which would probably require new technical tools; as far as this manuscript is concern, these issues are left open as interesting research directions. Our contributions, in turn, are related to the limiting case in which the velocity tends to infinity. In this scenario, one can manage to combine ideas from probability theory, specifically Markov Chains and Averaging Principles, together with stability analysis for the standard Kuramoto model to obtain synchronization results either for any velocity and close enough initial configuration (Theorem \ref{thm:basin}) or for high enough velocity, i.e. $\ep$ small enough (Theorem \ref{thm-main}).

Finally, we note the connection with the classical XY-model \cite{maes2011rotating} and that for other famous models of synchronization (sometimes called consensus or flocking) such as Cucker-Smale model \cite{CuSm} or Vicsek model \cite{Vicsek}, time-varying topologies have been mathematically explored from different perspectives, generally in a different direction than ours. See e.g. \cite{jadbabaie2003coordination, Stilwell,  bonnet2021consensus} and references therein.

Also intriguing is the precise relation of our class of models with the deep theory of {\it{random attractors}} in Random Dynamical Systems \cite{arnold1995random, crauel1994attractors, flandoli2017synchronization}, in particular the synchronization by noise phenomena.

\subsection{Model description and main results}

Let $\G=(V,E)$ be a finite connected graph with vertex set $V$ and edge set $E$ that we will call the \textit{skeleton}. Assume that $\G$ is endowed with edge weights, i.e. to each pair $(u,v)\in V\times V$ is assigned a nonnegative symmetric weight 
\[
\pi(u,v)=\pi(v,u)\in[0,\infty).
\]
We thus can identify the edge set with the pairs of vertices with positive weights:
\[
E=\big\{(u,v)\in V^2:\; \pi(u,v)>0\big\}.
\]

For any $u\in V$, we denote its vertex degree by 
\begin{align}\label{bd-deg}
\pi(u):=\sum_{v\in V}\pi(u,v)>0;
 \qquad \text{and} \qquad A:= \sup_{u\in V}\pi(u) <\infty.
\end{align}

We will consider a general model that imposes Markovian dynamics on the graph and the Kuramoto \abbr{ODE} system running on top of it. Each of these dynamics can have, in principle, its own scale. Many interesting questions arise naturally in terms of the possible emergent phenomena that pop up from the interplay between the two different scales and the features of the particular Markovian dyanmics on the graph. To keep focused we will center our work in two specific examples: a family of $N$ independent random walkers on the graph and what we call the \textit{Dynamic Random Conductance Model} (\abbr{DRC}). Before we state them, we highlight that the underlying \emph{finite} graph $\G$ is fixed throughout the manuscript, as well as the size $N$ of the population of random walkers. 

\subsubsection*{Kuramoto's random walkers}
For the first one, given an integer $N\ge2$ and $\ep\in(0,\infty)$, let $\bX^\ep(t):=(X^\ep_1(t),...,X^\ep_N(t))$ be a collection of $N$ independent continuous time (constant speed) simple random walks on the graph $\G$ with constant jump rate $\ep^{-1}$. Since $\G$ is finite, each $X^\ep_i(t)$ is an irreducible, positive recurrent Markov chain, and has a unique reversible measure  independent of $\ep$
\begin{align}\label{inv-meas}
\mu(u)=\frac{\pi(u)}{\sum_{v\in V}\pi(v)}, \quad u\in V.
\end{align}


\begin{remark}\label{rmk:generalization}
We can think of $\bX^\ep(t)$ as a subgraph of $\G$ that changes with time and interpret this dynamics as a Markov chain with state space the set of subgraphs of $\mathbb G$. We point out that the main requirement on the process $\bX^\ep(t)$ that we will use in our arguments is the fact that any connected subgraph of $\G$ with $\le N$ vertices is positive recurrent for the dynamics. In consequence, our results could apply to more general processes such as other types of random walks, exclusion processes or zero-range processes, among others. 

Moreover, we will show that in the regime $\ep \to 0$ and $t\to\infty$ the particular graph structure plays no role as far it is connected. This is because the family of subgraphs $\G$ defined by the random walks is rich enough to guarantee that the oscillators work on the high density regime for large enough time. In particular the complete graph in $N$ nodes is positive recurrent.

\end{remark}

Finally, we will denote by $\T^N:=\left({\R}/{2\pi\Z}\right)^N$ the $N$-dimensional torus, naturally identified with $(\mathbb{S}^1)^N$, where $\mathbb{S}^1$ is the unit circle and by $\|\cdot\|_2$ and $\|\cdot\|_\infty$ the norms in finite dimensional $\ell^2$ and $\ell^\infty$ respectively, that is
\[
\|\pmb{x}\|_2=\left(\sum_{i=1}^N|x_i|^2\right)^{1/2}, \quad \|\pmb{x}\|_\infty=\max\{|x_1|,\ldots,|x_N|\}.
\]

\medskip

A {\it{Kuramoto Random Walk}} (\abbr{KRW} for short) with skeleton $\G$, natural frequencies $\pmb{\omega}:=(\omega_i)_{i=1}^N\in\T^N$, and coupling constant $K>0$ is an \abbr{ODE} system for $\pmb{\theta}^\ep(t):=(\theta^\ep_1(t),...,\theta^\ep_N(t))\in\T^N$ with random time-dependent coefficients of the form
\begin{equation}\label{dyn-km}
 \begin{aligned}
\frac{\displaystyle d}{\displaystyle dt}\theta^\ep_i(t) & =  \omega_i+\frac{\displaystyle K}{\displaystyle N}\displaystyle\sum_{j=1}^Na_{ij}(\bX^\ep(t))\sin\big(\theta^\ep_j(t)-\theta^\ep_i(t)\big),  \quad i=1,...,N,\; t\ge0,\\
\pmb{\theta}^\ep|_{t=0}&= \pmb{\theta}^\ep_0,  \quad \bX^\ep|_{t=0}= \bx^\ep_0,&&
\end{aligned}
\end{equation}
where for $\bx=(x_1,...,x_N)\in V^N$, $i,j=1,...N$, we write
\begin{align}\label{dyn-weight}
a_{ij}(\bx):=\pi(x_i,x_j)\ge0.
\end{align}
That is, the strengh of the link between $\theta_i$ and $\theta_j$ is given by $\pi(x_i,x_j)$ if $x_i$ and $x_j$ occupy neighboring sites and zero otherwise. We call $\theta_i(t)\in\T$ the {\it{phase}} of the $i$-th oscillator at time $t$. We will often abbreviate using the notation
\begin{align}\label{km-drift}
b_i(\pmb{\theta},\bx):=\omega_i+\frac{K}{N}\sum_{j=1}^Na_{ij}(\bx)\sin\big(\theta_j-\theta_i\big).
\end{align}

One should think of each mobile agent $i\in\{1,...,N\}$ as occupying position $X^\ep_i(t)\in\G$, moving at speed $\ep^{-1}$ and carrying a phase $\theta^\ep_i(t)\in\T$. Each pair of agents $(i,j)$ tends to synchronize their phases (or flashings) only when they are adjacent with respect to the underlying graph structure.

It is natural to assume that all vertices have a self edge (or loop), i.e.
\[
\pi(u,u)>0, \quad \forall u\in V,
\]
so that two agents occupying the same vertex will surely interact and we'll do so when considering this model.

\begin{remark}\label{rmk:bdd}\leavevmode
\begin{enumerate}[label=(\alph*)]

\item Since $a_{ij}(\bx)\le A$ for any $\bx\in V^N$ and $i,j$,
\[
|b_i(\pmb{\theta},\bx)|\le KA+\|\pmb{\omega}\|_\infty\text{ for any }\pmb{\theta},\bx, i.
\] 
\item We have $a_{ij}(\bx)=a_{ji}(\bx)$ for any $\bx\in V^N$ and $i,j$.
\end{enumerate}
\end{remark}

{We mainly consider homogeneous models, where the natural frequencies $\omega_i=\ovl \omega$ for all $i$.
It is well-known that in this case, by rotating each $\theta_i(t)$ by the same $\ovl\omega t$, we can assume without loss of generality that the common frequency $\ovl\omega =0$.} In such case, summing \eqref{dyn-km} in $i$ we see that, due to the fact that  $a_{ij}(\bX^\ep(t))$ is symmetric and that sine is an odd function, the sum $t\mapsto\sum_{i=1}^N\theta^\ep_i(t)$ is constant in time. Up to a global translation of the initial condition, one can impose without loss of generality that $\sum_{i=1}^N\theta^\ep_i(0)=0$ such that $\mathbf{0}:=0\mathbf{1}_N$ is the all-in-phase state and a fixed point of the dynamics, where $\mathbf{1}_N$ denotes the all $1$ vector of dimension $N$. We are mainly interested in whether the dynamic Kuramoto models \eqref{dyn-km} and \eqref{kuramoto.RCM} below achieve \emph{asymptotic phase synchronization}, namely, whether $\pmb{\theta}^\ep(t)\to\mathbf{0}$ for fixed $\ep$ as $t\to\infty$,  with high probability.

One could be tempted to use a change of variables to extrapolate the results from the smallness regime to an arbitrary velocity. We point out, however, that this cannot be achieved because of the lack of scale invariance in the bound of the weights. 
Indeed, it is well-known that the laws of the random walks $\{\bX^\ep(t)\}$ for different $\ep$ are related by a time change:
\begin{align*}
\bX^\ep(\ep t)=\bX^1(t), \quad t\ge0.
\end{align*}
Furthermore, a direct computation then shows that the laws of the oscillators are related by
\[
\pmb\theta^\ep(\ep t)=\wt{\pmb\theta^1}(t), \quad t\ge 0,
\]
where $\{\wt{\pmb\theta^1}(t)\}$ is not the solution of \eqref{dyn-km} with coefficients $a_{ij}(\bX^1(t))$, but is instead with coefficients $\ep\, a_{ij}(\bX^1(t))$. Since the set up of our model is such that the weights $\pi(u,v)$ (on which $a_{ij}(\cdot)$ are based) have to be given apriori and satisfy the uniform bound \eqref{bd-deg}, this connection between $\{\pmb\theta^\ep(t)\}$ for different $\ep$'s is not useful to transfer what we obtained for \eqref{dyn-km} at $\ep$ small enough to the case $\ep=1$. Namely, below we'll give a precise description of the behavior of solutions of \eqref{dyn-km} as $\ep \to 0$, in the sense that our synchronization results work for sufficiently small (positive) values of $\ep$. This description can not be exported to other values of $\ep$. Although we will give some information for fixed $\ep$, several questions remain an open problem.

\begin{remark}[The role of the parameter $K$]\label{rmk:K}
The coupling constant $K$ does not play a significant role in our case and it does not have a significant effect on the results we discuss. When dealing with frequency synchronization, there is a critical value $K_c$ below which synchronization fails, see \cite{strogatz2000kuramoto} for further details. The condition for such failure is given in terms of the size of $\omega_i$ relative to $K$. However, we deal chiefly with phase synchronization, in which case it is a necessary condition that all natural frequencies coincide (it is easy to see that phase synchronization cannot be achieved otherwise). Then, as pointed out above, one may assume all natural frequencies are equal to $0$ and the parameter $K$ ceases to be relevant.
\end{remark}

\subsubsection*{Dynamic Random Conductance Model.}
The second model we consider in this paper consists in allowing each of the edges in $\mathbb G$ to appear and disappear at some rate for each other, independently. More precisely, let $\epsilon, \kappa >0$. For each $t\ge 0$, we consider the subgraph of $\mathbb G$ given by $G^{\ep,\kappa}(t) = (V,\cE^{\ep,\kappa}(t))$. For each edge $e \in E$ we have a continuous time Markov chain $(e^{\ep,\kappa}(t))_{t\ge 0}$ with state space $\{0,1\}$; the chain jumps from one to zero at rate $\epsilon^{-1}$ 
and from zero to one at rate $\kappa$. Each of the chains evolves independently. The edge set $\cE^{\ep,\kappa}(t)$ is given by
\[
\cE^{\ep,\kappa}(t):= \{ e \in E \colon e^{\ep,\kappa}(t)=1 \}.
\]
Unless we state it, we will assume $e(0)=1$ for every $e \in E$, so that $\epsilon=\infty$ corresponds to the frozen graph $\mathbb G$, while $\epsilon, \kappa\in(0,\infty)$ allows each edge of the graph to be present or absent at different times, independently.

The \textit{Kuramoto Model on the Dynamic Random Conductance Model} with skeleton $\G$, natural frequencies $\pmb{\omega}:=(\omega_u)_{u\in V}\in\T^{|V|}$, and coupling constant $K>0$ is the \abbr{ODE} system with random coefficients
\begin{equation}
\label{kuramoto.RCM}
\begin{aligned}
\frac{\displaystyle d}{\displaystyle dt} {\theta}^{\ep,\kappa}_u(t) = \, & \displaystyle \omega_u+\frac{\displaystyle K}{\displaystyle{|V|}}\sum_{v\in V}a_{uv}(\cE^{\ep,\kappa}(t)) \sin({\theta}^{\ep,\kappa}_v(t)-{\theta}^{\ep,\kappa}_u(t)), \quad u\in V,\; t>0\\
{\pmb\theta}^{\ep,\kappa}|_{t=0}  = \, &{\pmb\theta}^{\ep,\kappa}_0, \quad \cE^{\ep,\kappa}|_{t=0}=\cE^{\ep,\kappa}_0,
\end{aligned}
\end{equation}
where for any $\cE\subset E$, $(u,v)\in E$ we denote
\[
a_{uv}(\cE):=1_{ \{(u,v)\in \cE\}} \pi(u,v)\ge0.
\]
We abbreviate the \abbr{RHS} of \eqref{kuramoto.RCM} using
\begin{align*}
b_u(\pmb\theta,\cE):=\omega_u+\frac{K}{|V|}\sum_{v\in V}a_{uv}(\cE) \sin(\theta_v-\theta_u), \quad u\in V, \; \pmb\theta\in\T^{|V|},\;\cE\subset E.
\end{align*}
It has a similar form as \eqref{km-drift}, having coefficients $a_{uv}(\cE)$ instead of $a_{ij}(\bx)$, and $|V|$ instead of $N$. Thus, we use a similar notation to allow for a uniform treatment of the two models.

The name dynamic random conductance model comes from the famous (static) random conductance model \cite{biskup2011recent, kumagai2014random} that allows each edge of the graph to have a random weight. This can be done also in this context but for simplicity we prefer to allow each edge $\{u,v\}$ to take just two values: zero or $\pi(u,v)$.

Observe that $G^{\ep,\kappa}(t)=(V,\cE^{\ep,\kappa}(t))_{t\ge 0}$ is, as in the previous model, a continuous time Markov chain with state space the subgraphs of $\mathbb G$.

We are particularly interested in this model when $\mathbb G$ is a Wiley-Strogatz-Girvan graph since those networks allow twisted stable states. We will show that for every $\epsilon, \kappa \in(0,\infty)$ these states are no longer equilibria of the system. Hence, allowing the edges to disappear for a tiny amount of time, eliminates the twisted stable states. In this sense one can understand those twisted state as \textit{spurious}. Moreover, we will show that the only stable equilibrium is the all-in-phase state.

Nevertheless, twisted states have been observed in real phenomena in the form of propagating waves and hence we wouldn't qualify these states and their stability as spurious, but we think that in order to state their existence as an emergent behavior of the model we should be able to prove their presence as a more persistent object.

Both the \abbr{KRW} and \abbr{DRC} are special cases of a model given by the random \abbr{ODE} \eqref{kuramoto.RCM} in which $G(t)=(V(t),E(t))$ is a continuous time Markov chain in the space of subgraphs of $\mathbb G$.

{
\begin{remark}
For the \abbr{ODE}s \eqref{dyn-km} and \eqref{kuramoto.RCM}, existence and uniqueness (for every realization of the randomness) hold by Carath\'eodory's existence theorem and Remark \ref{rmk:bdd} (a) and Lemma \ref{rmk:lip} (b). 
\end{remark}}


\medskip

\subsection{Main results.}

Our goal is to understand how the fact that the network evolves with time in a random way affects the phase synchronization of the system. The general picture to have in mind is that randomness leads to global synchronization since the specific graphs that support other stable states and the initial conditions that converge towards those states are not persistent. Hence, at some point the system will reach the domain of attraction of the all-in-phase state.
More precisely, although the topology of the network changes with time, for a large amount of time there will be a tendency towards the phase-cohesive set $\Delta(\gamma)$, which is defined by
\begin{align}\label{basin}
\Delta(\gamma):=\{\pmb{\theta}:\, \max_{i,j}|\theta_i-\theta_j|\le \gamma\}.
\end{align}
(Here for $\theta_i,\theta_j\in\T$ we write, with a slight abuse of notation, $|\theta_i-\theta_j|\in[0,\pi]$ for their geodesic distance, see \cite[p. 259]{bullo2020lectures}.)

In Section \ref{sec.anyep} we will show that for $\gamma < \pi/2,\:\Delta(\gamma)$ is invariant for the dynamics independently of the network topology and hence the all-in-phase state is achieved almost surely starting from $\Delta(\gamma)$. As a consequence, we have the following theorem.
\begin{theorem}\label{thm:basin}
Fix any $N\ge 2$ and $\gamma\in(0,\pi/2)$, and consider $\pmb\omega=\mathbf0$ in \eqref{dyn-km} (resp. \eqref{kuramoto.RCM}). Then for any $\ep>0$ (resp. $\ep,\kappa>0$), and any initial condition $\pmb{\theta}^\ep_0$ (resp. ${\pmb\theta}^{\ep,\kappa}_0$) $\in \Delta(\gamma)$ the system converges $\P$-a.s. to $\mathbf{0}$ (up to a global translation) as $t\rightarrow\infty$.
\end{theorem}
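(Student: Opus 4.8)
The plan is to separate the deterministic, topology-independent \emph{invariance} of $\Delta(\gamma)$ from the \emph{a.s.\ convergence}, which is where the randomness of the graph actually enters. For the invariance I would work in lifted coordinates: since $\gamma<\pi/2$, every configuration in $\Delta(\gamma)$ lies inside an arc of length $<\pi$, so one may choose real-valued representatives $\theta_i\in\R$ lying in an interval of length $\le\gamma$ with $\sum_i\theta_i=0$ (the mean is conserved, as already noted in the excerpt, because $a_{ij}=a_{ji}$ and $\sin$ is odd). Let $D(t):=\max_{i,j}|\theta_i(t)-\theta_j(t)|$ and let $i^*,j^*$ attain the running maximal and minimal phase. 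For the maximal oscillator every term $a_{i^*j}(\bX^\ep(t))\sin(\theta_j-\theta_{i^*})$ has argument in $[-\gamma,0]$ and hence is $\le0$, since $a_{i^*j}\ge0$; symmetrically the minimal oscillator has nonnegative drift. Passing to the upper Dini derivative of the locally Lipschitz maps $t\mapsto\max_i\theta_i$ and $t\mapsto\min_i\theta_i$ gives $D^{+}D(t)\le0$ for \emph{every} realization of the graph process and every $\ep$ (resp.\ $\ep,\kappa$), irrespective of which edges are active; because the coefficients $a_{ij}(\bX^\ep(t))$ are piecewise constant and $\pmb\theta$ is continuous across the jumps, $D$ is nonincreasing and $\Delta(\gamma)$ is invariant. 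The identical sign computation applies verbatim to \eqref{kuramoto.RCM} with $N$ replaced by $|V|$.

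For the convergence I would use the Lyapunov function $L(t):=\tfrac12\sum_i\theta_i(t)^2$ in these coordinates. Differentiating and symmetrizing in $(i,j)$ yields $\dot L=-\tfrac{K}{2N}\sum_{i,j}a_{ij}(\bX^\ep(t))\,(\theta_i-\theta_j)\sin(\theta_i-\theta_j)\le0$ via $x\sin x\ge0$ on $[-\gamma,\gamma]$, so $L$ is nonincreasing and converges to some $L_\infty\ge0$. To force $L_\infty=0$ I would quantify the decay whenever the \emph{interaction graph is connected}. On $[-\gamma,\gamma]$ one has $x\sin x\ge\tfrac{\sin\gamma}{\gamma}x^2$, hence $\dot L\le-\tfrac{K\sin\gamma}{N\gamma}\,\pmb\theta^{\!\top}\mathcal L(t)\,\pmb\theta$, where $\mathcal L(t)$ is the weighted Laplacian of the interaction graph at time $t$ and $\pmb\theta^{\!\top}\mathcal L(t)\pmb\theta=\tfrac12\sum_{i,j}a_{ij}(\theta_i-\theta_j)^2$. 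Since $\pmb\theta\perp\mathbf1$, connectedness gives $\pmb\theta^{\!\top}\mathcal L(t)\pmb\theta\ge\lambda_2(t)\,|\pmb\theta|^2=2\lambda_2(t)L$, and because the interaction graph takes only finitely many values the Fiedler numbers of the connected ones are bounded below by a single $\lambda_*>0$. Writing $C$ for the set of graph-states whose interaction graph is connected, this produces the clean inequality $\tfrac{d}{dt}\log L(t)\le-c\,\lambda_*\,\mathbf1_{C}(\bX^\ep(t))$ with $c=c(\gamma,K,N)>0$, valid a.e.\ in $t$ (with $L$ continuous across jumps, so integrable along the trajectory).

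The probabilistic input enters only in the final step. The set $C$ carries positive stationary mass: for the \abbr{KRW} it contains every configuration with all $N$ walkers at a single vertex (the self-loops make the interaction graph complete), of mass $\sum_u\mu(u)^N>0$; for the \abbr{DRC} it contains the full graph $\G$, present with probability $\prod_{e}\kappa/(\kappa+\ep^{-1})>0$. Since $\bX^\ep(t)$ (resp.\ the edge process) is a finite, irreducible, positive recurrent continuous-time Markov chain, the ergodic theorem gives that $\tfrac1t\int_0^t\mathbf1_{C}(\bX^\ep(s))\,ds$ converges $\P$-a.s.\ to the stationary mass of $C$, which is positive, so $\int_0^t\mathbf1_{C}\to\infty$. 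Integrating the differential inequality then forces $\log L(t)\to-\infty$, i.e.\ $L(t)\to0$ and $\pmb\theta^\ep(t)\to\mathbf0$ $\P$-a.s.\ (up to the global translation fixed by the mean-zero normalization). I expect the invariance step to be essentially routine; the real work is in the convergence, and within it the main obstacle is the interplay between the piecewise-constant random switching and the Lyapunov estimate — securing one uniform gap $\lambda_*$ across all connected configurations and justifying the ergodic-averaging step that makes the total connectivity time diverge. This last point is exactly the role played by the positive-recurrence hypothesis emphasized in Remark \ref{rmk:generalization}.
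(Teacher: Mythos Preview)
Your proposal is correct and follows essentially the same strategy as the paper: invariance of $\Delta(\gamma)$ via monotonicity of the phase spread, then exponential decay of $\|\pmb\theta\|_2^2$ via a uniform spectral gap of the interaction Laplacian on connected configurations, combined with the $\P$-a.s.\ divergence of the cumulative connected time coming from positive recurrence. The only notable difference is that where the paper obtains the decay rate by a vector-valued mean value theorem applied to $\nabla_{\pmb\theta}\cU$ (producing the Hessian, viewed as the Laplacian of a fictitious graph with $\cos$-weighted edges bounded below by $\cos\gamma$), you instead use the elementary scalar inequality $x\sin x\ge(\sin\gamma/\gamma)\,x^2$ on $[-\gamma,\gamma]$ to reach the Laplacian of the actual interaction graph directly --- a mild simplification, but otherwise the arguments coincide.
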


We will further characterize the equilibria of the system and show that $\mathbf{0}$ is asymptotically stable while all other equilibria are unstable; due to randomness, we have non-autonomous \abbr{ODE}s and hence we first need to clarify what we mean with that. We give the following definition:

\begin{definition}\label{def:equil}
For \abbr{KRW}, we say that $\wh{\pmb\theta^\ep}\in\T^N$ is a fixed point (or equilibrium point) of the \abbr{ODE} \eqref{dyn-km} if $\P$-a.s.
\begin{align*}
b_i\big(\wh{\pmb\theta^\ep}, \bX^\ep(t)\big)=0, \quad \forall t\ge 0,\; i=1,...,N.
\end{align*}
For \abbr{DRC}, we say that $\wh{\pmb\theta^{\ep,\kappa}}\in\T^{|V|}$ is a fixed point (or equilibrium point) of the \abbr{ODE} \eqref{kuramoto.RCM} if $\P$-a.s.
\begin{align*}
b_u\big(\wh{\pmb\theta^{\ep,\kappa}}, \cE^{\ep,\kappa}(t)\big)=0, \quad \forall t\ge 0,\; u\in V.
\end{align*}
\end{definition}

The characterization of the possible equilibrium states, that follows from the irreducibility of the processes, is now enclosed in the following proposition.
\begin{proposition}\label{ppn:char-equil}
(a) For \eqref{dyn-km} and any $\ep\in(0,\infty)$, $\P$-a.s. any fixed point $\wh{\pmb\theta^\ep}$ belongs to the deterministic set 
\begin{align*}
\bS_1:=\big\{\pmb\theta\in\T^N:\; b_i(\pmb\theta, \bx)=0, \; \forall \bx\in V^N, \, i=1,...,N\big\}.
\end{align*}
For \eqref{kuramoto.RCM} and any $\ep,\kappa\in(0,\infty)$, $\P$-a.s. any fixed point $\wh{\pmb\theta^{\ep,\kappa}}$ belongs to the deterministic set 
\begin{align*}
\bS_2:=\big\{\pmb\theta\in\T^{|V|}:\; b_u(\pmb\theta, \cE)=0, \; \forall \cE\subset E, \, u\in V\big\}.
\end{align*}
(b) Now restrict to the homogeneous case, i.e. $\pmb\omega=\mathbf0$. For \abbr{KRW}, assume that the skeleton graph $\G$ is not complete\footnote{If $\G$ is a complete graph, this model reduces to static Kuramoto on a complete graph of $N$ vertices. In this case there is no dynamics for the graph and not randomness at all. As a consequence, the behavior of the model is well understood.}, i.e. there exists a pair of vertices $u^*\neq  v^*\in V$ with $\pi(u^*,v^*)=0$, then
\begin{align*}
\bS_1=\bS:=\big\{\pmb\theta\in \T^N:\; |\theta_i-\theta_j|\in\{0,\pi\}, \; \forall i,j=1,...,N\big\}.
\end{align*}
For \abbr{DRC}, we have that
\begin{align*}
\bS_2=\bS:=\big\{\pmb\theta\in \T^{|V|}:\; |\theta_u-\theta_v|\in\{0,\pi\}, \; \forall u,v\in V\big\}.
\end{align*}
\end{proposition}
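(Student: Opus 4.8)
The plan is to treat part (a) through recurrence of the underlying graph process, and part (b) by probing the zero-drift condition with cleverly chosen graph states.

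For part (a), the key observation is that the graph process visits every possible state almost surely. For \abbr{KRW}, $\bX^\ep(t)$ is the product of $N$ independent, irreducible continuous-time random walks on the finite connected graph $\G$, hence is itself an irreducible continuous-time Markov chain on the finite state space $V^N$; being finite and irreducible it is positive recurrent, so on an event $\Omega_0$ of full probability its trajectory visits every $\bx\in V^N$. Intersecting $\Omega_0$ with the (also full-probability) event from Definition \ref{def:equil} on which $b_i(\wh{\pmb\theta^\ep},\bX^\ep(t))=0$ for all $t$ and $i$, we conclude: for each $\bx\in V^N$ there is a time $t$ with $\bX^\ep(t)=\bx$, forcing $b_i(\wh{\pmb\theta^\ep},\bx)=0$; since $\bx$ and $i$ are arbitrary, $\wh{\pmb\theta^\ep}\in\bS_1$ almost surely. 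The \abbr{DRC} case is identical, with the product chain on $\{0,1\}^E$ (each edge flipping between its two states at the positive rates $\ep^{-1}$ and $\kappa$) in place of the walks, yielding $\wh{\pmb\theta^{\ep,\kappa}}\in\bS_2$.

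For part (b), both inclusions $\bS\subseteq\bS_1$ and $\bS\subseteq\bS_2$ are immediate: if $|\theta_i-\theta_j|\in\{0,\pi\}$ for all pairs then every $\sin(\theta_j-\theta_i)$ vanishes and the homogeneous drift ($\pmb\omega=\mathbf0$) is identically zero. For the reverse inclusion in the \abbr{DRC} case, I would test the defining condition of $\bS_2$ on single-edge states $\cE=\{(u_0,v_0)\}$ with $(u_0,v_0)\in E$: then $b_{u_0}(\pmb\theta,\cE)=\frac{K}{|V|}\pi(u_0,v_0)\sin(\theta_{v_0}-\theta_{u_0})$, and since $\pi(u_0,v_0)>0$ this forces $|\theta_{u_0}-\theta_{v_0}|\in\{0,\pi\}$ on every edge. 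Connectivity of $\G$ then upgrades this to all pairs: along any path joining $u$ to $v$, the telescoped difference $\theta_v-\theta_u$ is a sum of terms each congruent to $0$ or $\pi$ modulo $2\pi$, hence itself a multiple of $\pi$, so $|\theta_u-\theta_v|\in\{0,\pi\}$ and $\pmb\theta\in\bS$.

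For the reverse inclusion $\bS_1\subseteq\bS$ in the \abbr{KRW} case, the crux is to isolate the interaction of a single pair of agents. Fixing $i\neq j$, I would use the self-loops ($\pi(u,u)>0$) together with non-completeness to build a state $\bx$ with $x_i=x_j=u^*$ and $x_k=v^*$ for all $k\neq i,j$, where $u^*\neq v^*$ satisfy $\pi(u^*,v^*)=0$. In $b_i(\pmb\theta,\bx)=\frac{K}{N}\sum_k\pi(u^*,x_k)\sin(\theta_k-\theta_i)$ the term $k=i$ vanishes trivially, the terms $k\neq i,j$ vanish because $\pi(u^*,v^*)=0$, and only $k=j$ survives, giving $b_i(\pmb\theta,\bx)=\frac{K}{N}\pi(u^*,u^*)\sin(\theta_j-\theta_i)$. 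Since $\pi(u^*,u^*)>0$, the vanishing of this drift forces $|\theta_i-\theta_j|\in\{0,\pi\}$; as the pair was arbitrary, $\pmb\theta\in\bS$. I expect this last construction to be the only genuinely model-specific step: the non-completeness hypothesis is exactly what makes a ``parking vertex'' $v^*$ available, so that the unwanted couplings can be switched off while the self-loop keeps the target pair coupled (note that for $N=2$ the parking step is vacuous and $v^*$ is not even needed). Part (a)'s recurrence is standard Markov chain theory, and the connectivity argument in \abbr{DRC} is routine, so the real content lies in this selection of probing states.
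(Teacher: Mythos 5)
Your proposal is correct and follows essentially the same route as the paper: part (a) via positive recurrence/irreducibility of the finite-state graph process forcing the drift to vanish at every reachable graph state, and part (b) via the same probing states (the configuration with two agents on $u^*$ and the rest parked on the non-adjacent $v^*$ for \abbr{KRW}, and single-edge subgraphs plus connectivity of the skeleton for \abbr{DRC}). The only cosmetic difference is that you make the telescoping-along-a-path step in the \abbr{DRC} case explicit where the paper simply invokes connectedness.
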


We also give some results in the direction of stability, see Proposition \ref{prop:stab}. We believe that a much stronger conclusion should hold, and indeed we conjecture that the state $\mathbf0$ attracts the whole phase space except a set of Lebesgue measure zero, see Conjecture \ref{conj:attractor}.

A result in the direction of Conjecture \ref{conj:attractor} is global synchronization with high probability. This will be proved with the aid of an averaging principle for \eqref{dyn-km} for small values of $\epsilon$. Indeed, in Section \ref{sec.aver} we prove that as $\ep\rightarrow0^+$ solutions of \eqref{dyn-km} converge to solutions of an appropriate averaged -deterministic- \abbr{ODE}. More precisely, consider the system $\ovl{\pmb{\theta}}(t):=(\ovl{\theta}_1(t),...,\ovl{\theta}_N(t))\in\T^N$ with constant, deterministic coefficients and the same natural frequencies $\pmb{\omega}:=(\omega_i)_{i=1}^N\in\T^N$ as in \eqref{dyn-km}
\begin{equation}\label{ave-km}
\begin{cases}
\frac{\displaystyle d}{\displaystyle dt}\ovl{\theta}_i(t)=\omega_i+\frac{\displaystyle K}{\displaystyle N}\sum_{j=1}^N\ovl{a}\sin\big(\ovl{\theta}_j(t)-\ovl{\theta}_i(t)\big), \quad i=1,...,N, \; t\ge0\\\\
\ovl{\pmb{\theta}}|_{t=0}=\ovl{\pmb\theta}_0, 
\end{cases}
\end{equation}
where 
\begin{align*}
\ovl{a}:=\sum_{u,v\in V}\pi(u,v)\mu(u)\mu(v)>0.
\end{align*}
As before, we will abbreviate the \abbr{RHS} of \eqref{ave-km} using
\[
\ovl{b}_i(\pmb\theta):=\omega_i+\frac{K}{N}\sum_{j=1}^N\ovl{a}\sin\big(\theta_j-\theta_i\big).
\] 
Then, we have the following proposition, in it's statement we write $\pmb{\theta}^{\ep,\pmb{\alpha}}(t),\ovl{\pmb{\theta}^{\pmb{\alpha}}}(t)$ to indicate the initial condition of respective \abbr{ODE} \eqref{dyn-km}, \eqref{ave-km} is $\pmb{\alpha}\in\T^N$.
\begin{proposition}\label{prop:ave}
Consider the model \eqref{dyn-km}. Fix $N\ge2$ and consider any $\pmb{\omega}=\{\omega_i\}_{i=1}^N\in\T^N$. Then, for any finite $T$ we have that
\begin{align}\label{ave-rate}
\lim_{\ep\to0}\sup_{\pmb{\alpha}\in\T^N}\E\big[\sup_{t\in[0,T]}\|\pmb{\theta}^{\ep,\pmb{\alpha}}(t)-\ovl{\pmb{\theta}^{\pmb{\alpha}}}(t)\|_\infty\big]=0.
\end{align}
\end{proposition}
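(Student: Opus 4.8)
The plan is to subtract the integral forms of \eqref{dyn-km} and \eqref{ave-km}, isolate a rapidly oscillating mean-zero term, and close a Gronwall estimate after showing that this term averages out. First I would lift both $\pmb\theta^{\ep,\pmb\alpha}$ and $\ovl{\pmb\theta^{\pmb\alpha}}$ to continuous paths in $\R^N$ sharing a common lift of $\pmb\alpha$ (the vector fields lift since $\sin$ is $2\pi$-periodic); as the Euclidean distance dominates the geodesic one, it suffices to control $D^\ep(t):=\|\pmb\theta^{\ep,\pmb\alpha}(t)-\ovl{\pmb\theta^{\pmb\alpha}}(t)\|_\infty$ in $\R^N$. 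Subtracting the integral equations, the frequencies cancel and, abbreviating $\theta^\ep_i=\theta^{\ep,\pmb\alpha}_i$, $\ovl\theta_i=\ovl\theta^{\pmb\alpha}_i$,
\begin{align*}
\theta^\ep_i(t)-\ovl\theta_i(t)=\frac{K}{N}\int_0^t\sum_{j=1}^N\Big(a_{ij}(\bX^\ep(s))\sin(\theta^\ep_j-\theta^\ep_i)(s)-\ovl a\,\sin(\ovl\theta_j-\ovl\theta_i)(s)\Big)\,ds.
\end{align*}
Adding and subtracting $a_{ij}(\bX^\ep(s))\sin(\ovl\theta_j-\ovl\theta_i)(s)$ splits the integrand into a \emph{Lipschitz part}, $a_{ij}(\bX^\ep(s))\,[\sin(\theta^\ep_j-\theta^\ep_i)-\sin(\ovl\theta_j-\ovl\theta_i)](s)$, and a \emph{fluctuation part}, $[a_{ij}(\bX^\ep(s))-\ovl a]\,\ovl S_{ij}(s)$ with $\ovl S_{ij}(s):=\sin(\ovl\theta_j(s)-\ovl\theta_i(s))$ \emph{deterministic}.

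Next I would turn this into a Gronwall inequality. Using $a_{ij}\le A$ together with the $1$-Lipschitz bound $|\sin(\theta^\ep_j-\theta^\ep_i)-\sin(\ovl\theta_j-\ovl\theta_i)|\le|\theta^\ep_j-\ovl\theta_j|+|\theta^\ep_i-\ovl\theta_i|\le2D^\ep(s)$, the Lipschitz part is bounded by $2KA\int_0^tD^\ep(s)\,ds$. Writing $M^\ep_{ij}(t):=\int_0^t[a_{ij}(\bX^\ep(s))-\ovl a]\ovl S_{ij}(s)\,ds$ for the fluctuation and taking suprema over $i$, I get
\[
D^\ep(t)\le2KA\int_0^tD^\ep(s)\,ds+K\max_{i,j}\sup_{\tau\le T}|M^\ep_{ij}(\tau)|,\qquad t\le T,
\]
whence Gronwall yields $\sup_{t\le T}D^\ep(t)\le e^{2KAT}K\max_{i,j}\sup_{\tau\le T}|M^\ep_{ij}(\tau)|$. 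The whole proposition then reduces to the averaging estimate $\lim_{\ep\to0}\sup_{\pmb\alpha}\E[\sup_{t\le T}|M^\ep_{ij}(t)|]=0$ for each fixed $i\ne j$, the diagonal $j=i$ being irrelevant since $\ovl S_{ii}\equiv0$.

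The core step is this averaging estimate. Because the walkers are independent, $(X^\ep_i,X^\ep_j)$ is a Markov chain on $V\times V$, reversible with respect to $\mu\otimes\mu$ (with $\mu$ as in \eqref{inv-meas}), whose generator is $\ep^{-1}$ times that of the rate-$1$ chain; hence its spectral gap is $\lambda_1/\ep$, where $\lambda_1>0$ is the fixed gap of a single rate-$1$ walk. The function $f:=\pi(\cdot,\cdot)-\ovl a$ has zero mean under $\mu\otimes\mu$ by the very definition of $\ovl a$, so reversibility and finiteness of $V$ give the pointwise decay $|(P^\ep_\tau f)(x)|\le Ce^{-\lambda_1\tau/\ep}\|f\|_\infty$ (with $P^\ep_\tau$ the semigroup), uniformly in the starting point $x$. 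This yields the uniform correlation bound $|\E[f(\bX^\ep(s))f(\bX^\ep(s'))]|\le Ce^{-\lambda_1|s-s'|/\ep}$ for \emph{any} deterministic initial position, so no stationarity of $\bX^\ep$ is needed; using $|\ovl S_{ij}|\le1$,
\[
\E\big[(M^\ep_{ij}(t))^2\big]\le\int_0^t\!\!\int_0^tCe^{-\lambda_1|s-s'|/\ep}\,ds\,ds'\le C'T\ep,\qquad t\le T.
\]
To upgrade this to the supremum I would note that the integrand defining $M^\ep_{ij}$ is bounded by $2A$ (as $a_{ij},\ovl a\in[0,A]$), so $M^\ep_{ij}$ is $2A$-Lipschitz in $t$; partitioning $[0,T]$ into $\le 2T/\delta$ points $t_k=k\delta$ gives $\sup_{t\le T}|M^\ep_{ij}(t)|\le\max_k|M^\ep_{ij}(t_k)|+2A\delta$, and bounding the maximum by the $\ell^2$-norm,
\[
\E\big[\sup_{t\le T}|M^\ep_{ij}(t)|\big]\le\Big(\tfrac{2T}{\delta}\,C'T\ep\Big)^{1/2}+2A\delta.
\]
Choosing $\delta=\ep^{1/3}$ gives $\E[\sup_{t\le T}|M^\ep_{ij}(t)|]\le C''\ep^{1/3}$.

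Finally, I would observe that the constants $A,\lambda_1,C,C',C''$ depend only on $\G$, $K$, $T$: the initial phase $\pmb\alpha$ enters the fluctuation bound solely through $\ovl S_{ij}$, and only via $|\ovl S_{ij}|\le1$, so the estimate is uniform in $\pmb\alpha$. Combining with the Gronwall bound and summing over the $\le N^2$ pairs,
\[
\sup_{\pmb\alpha\in\T^N}\E\Big[\sup_{t\le T}\|\pmb\theta^{\ep,\pmb\alpha}(t)-\ovl{\pmb\theta^{\pmb\alpha}}(t)\|_\infty\Big]\le e^{2KAT}KN^2C''\,\ep^{1/3}\xrightarrow[\ep\to0]{}0,
\]
which is \eqref{ave-rate}. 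I expect the main obstacle to be precisely the averaging step: one must establish decay of correlations \emph{uniformly in the (non-stationary) starting configuration}, so that no ergodic assumption on $\bX^\ep$ is invoked, and then trade the fixed-time $L^2$ control for a uniform-in-time one through the regularity of $M^\ep$. A cleaner, rate-improving alternative would replace the partitioning by solving the Poisson equation $\cL^\ep g=-f$ (so that $\|g\|_\infty=O(\ep)$) and applying Dynkin's formula to write $\int_0^t f(\bX^\ep)\,ds$ as a martingale plus an $O(\ep)$ boundary term, after an integration by parts to accommodate the time-dependent weight $\ovl S_{ij}$.
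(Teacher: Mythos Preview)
Your argument is correct, and it follows a genuinely different route from the paper's. The paper uses the classical Freidlin--Wentzell scheme: it partitions $[0,T]$ into mesh-$\delta$ subintervals, freezes the \emph{random} phase $\pmb\theta^\ep(k\delta)$ on each, and produces a fluctuation term $\int_{k\delta}^{(k+1)\delta}[b_i(\pmb\theta^\ep(k\delta),\bX^\ep(s))-\ovl b_i(\pmb\theta^\ep(k\delta))]\,ds$ that still depends on the random phase; this is handled by conditioning on $\cF_{k\delta}$, invoking the Markov property, and then the qualitative ergodic theorem (plus dominated convergence and compactness of $\T^N$) to get $\E[\Xi(\ep,\delta,T)]\to0$ without any rate. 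Gronwall is applied \emph{after} taking expectations, and the extra parameter $\delta$ is sent to $0$ at the very end.

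Your decomposition instead pivots on the \emph{deterministic} averaged trajectory: by adding and subtracting $a_{ij}(\bX^\ep(s))\sin(\ovl\theta_j-\ovl\theta_i)(s)$ you isolate a fluctuation $M^\ep_{ij}$ in which the only randomness is $\bX^\ep$ and the weight $\ovl S_{ij}$ is deterministic with $|\ovl S_{ij}|\le1$. This decoupling lets you run Gronwall pathwise and reduces everything to a single quantitative mixing estimate for the two-particle chain, which you obtain from the spectral gap. The payoff is an explicit rate $O(\ep^{1/3})$ (and, as you note, $O(\ep)$ via the Poisson-equation variant), uniform in $\pmb\alpha$ because $\pmb\alpha$ enters only through $|\ovl S_{ij}|\le1$. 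The paper's approach, by contrast, needs only the ergodic theorem and no spectral information, so it is softer and would transfer more readily to dynamics on the graph for which a spectral gap is unavailable or awkward to quantify; the price is that it yields no rate.
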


With the averaging principle at hand, in Section \ref{sec.global} we prove the following theorem for \abbr{KRW} which, roughly speaking, says that for any compact set $\Omega'$ inside a subset of $\T^N$ of full Lebesgue measure, convergence to $\mathbf{0}$ as $t\rightarrow\infty$ holds for $\ep$ small enough (depending on $\Omega'$) and with high probability.
\begin{theorem}\label{thm-main}
Consider \abbr{KRW} \eqref{dyn-km}. Fix $N\ge2$ and consider $\pmb\omega=\mathbf0$. There exists a set $\Omega\subset\T^N$ of full Lebesgue measure such that for any compact subset $\Omega'\subset\joinrel\subset\Omega$ and any $\eta>0$, there exists $\ep_0=\ep_0(\Omega',\eta)$, such that for any $\ep\in(0,\ep_0)$ and $\pmb{\alpha}\in\Omega'$, with probability at least $1-\eta$ the unique solution of \eqref{dyn-km} initialized at $\pmb{\theta}^{\ep}(0)=\pmb{\alpha}$ converges to $\mathbf{0}$ (up to a global translation) as $t\to\infty$.
\end{theorem}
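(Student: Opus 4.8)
The plan is to define $\Omega$ as the basin of attraction of $\mathbf0$ for the averaged deterministic flow \eqref{ave-km}, and then to push almost-global convergence from this averaged system to the random system \eqref{dyn-km} by combining the averaging estimate \eqref{ave-rate} with the invariance underlying Theorem \ref{thm:basin}. Since $\pmb\omega=\mathbf0$, equation \eqref{ave-km} is exactly the classical mean-field Kuramoto model on the complete graph with effective coupling $K\ovl a>0$. It is a gradient flow for the potential $\pmb\theta\mapsto-\frac{K\ovl a}{2N}\sum_{i,j}\cos(\theta_i-\theta_j)=-\frac{K\ovl a N}{2}\,r(\pmb\theta)^2$, where $r(\pmb\theta):=|\frac1N\sum_j e^{\mathrm i\theta_j}|$ is the order parameter. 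Its global minimizer (modulo global rotation) is the synchronized state $\mathbf0$, and every other equilibrium is non-attracting with a stable manifold of positive codimension; hence $\mathbf0$ is almost globally asymptotically stable and $\Omega$ has full Lebesgue measure. This is the standard almost-global synchronization of identical all-to-all Kuramoto oscillators, a special instance of the highly connected regime $\mu>3/4$ recalled in the introduction.

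First I would fix $\gamma_0\in(0,\pi/2)$ and a compact $\Omega'\subset\Omega$. Since $\Omega'\subset\Omega$, each averaged trajectory $\ovl{\pmb\theta^{\pmb\alpha}}$ with $\pmb\alpha\in\Omega'$ converges to $\mathbf0$, hence enters the open set $U:=\{\max_{i,j}|\theta_i-\theta_j|<\gamma_0/2\}$ in finite time; because $\Delta(\gamma_0/2)$ is forward invariant for \eqref{ave-km} (the complete graph being a special case of the invariance established in Section \ref{sec.anyep}), the trajectory remains in $\Delta(\gamma_0/2)$ thereafter. Using continuous dependence on initial data and compactness of $\Omega'$, I would then extract a single horizon $T=T(\Omega',\gamma_0)$ with $\ovl{\pmb\theta^{\pmb\alpha}}(t)\in\Delta(\gamma_0/2)$ for all $t\ge T$ and all $\pmb\alpha\in\Omega'$.

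Next I would apply the averaging principle at this fixed $T$. Put $\delta:=\gamma_0/8$. By Proposition \ref{prop:ave} and Markov's inequality there is $\ep_0=\ep_0(\Omega',\eta)$ such that for every $\ep\in(0,\ep_0)$ and every $\pmb\alpha\in\Omega'$,
$$\P\Big(\sup_{t\in[0,T]}\big\|\pmb\theta^{\ep,\pmb\alpha}(t)-\ovl{\pmb\theta^{\pmb\alpha}}(t)\big\|_\infty<\delta\Big)\ge 1-\eta.$$
On this event the geodesic triangle inequality gives, for all $i,j$, $|\theta^\ep_i(T)-\theta^\ep_j(T)|\le|\ovl\theta_i(T)-\ovl\theta_j(T)|+2\delta\le\gamma_0/2+\gamma_0/4<\pi/2$, so $\pmb\theta^{\ep,\pmb\alpha}(T)\in\Delta(3\gamma_0/4)$ with $3\gamma_0/4<\pi/2$. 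Finally, conditioning on $\cF_T$ and using the Markov property of $\bX^\ep$ to restart the dynamics at time $T$, the event above places the system in $\Delta(3\gamma_0/4)$; since the proof of Theorem \ref{thm:basin} yields pathwise forward invariance of $\Delta(\gamma)$ for $\gamma<\pi/2$, and $\P$-a.s. convergence to $\mathbf0$ therein for any starting graph configuration, we conclude $\pmb\theta^{\ep,\pmb\alpha}(t)\to\mathbf0$ (up to global translation) on this event, which has probability at least $1-\eta$.

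The hard part will be the first step: rigorously certifying that the averaged complete-graph flow sends Lebesgue-almost every initial condition to $\mathbf0$. This requires knowing that, apart from the synchronized state, all equilibria of \eqref{ave-km} (the bipolar configurations and the incoherent set $r=0$) fail to attract a set of positive measure, which is cleanest to argue from the gradient structure together with the stable manifold theorem. A secondary technical point is the uniformity of the entrance time $T$ over the compact set $\Omega'$, which must be handled through continuous dependence combined with the invariance of $\Delta(\gamma_0/2)$, rather than trajectory by trajectory.
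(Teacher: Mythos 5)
Your proposal is correct and follows essentially the same route as the paper: full-measure basin $\Omega$ for the averaged all-to-all system (which the paper simply cites as known, Remark \ref{rmk:ave-km}), a uniform entrance time into a phase-cohesive set $\Delta(\gamma)$ with $\gamma<\pi/2$ obtained by compactness and continuous dependence, the averaging estimate of Proposition \ref{prop:ave} plus Markov's inequality to transfer this to the random system with probability $1-\eta$, and finally Theorem \ref{thm:basin} (restarted at time $T$ via the Markov property) to conclude almost-sure convergence from the cohesive set. The only differences are cosmetic: the paper fixes the concrete constants $\pi/6$, $\pi/12$, $\pi/3$ where you carry a generic $\gamma_0$, and it does not re-derive the almost-global stability of the averaged complete-graph flow, which you flag as the ``hard part'' but which is standard and referenced.
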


\subsection{Short discussion on the physics literature}
Synchronization in time-varying networks have been widely considered in the physics community \cite{Faggian2019, Multi_Review, Buscarino2015, Fujiwara2011, Uriu2013, Frasca2008}. They consider Kuramoto as well as other synchronization models, some of which carry a similar flavor as our models. In particular, the averaging effect due to fast changes of network topologies has been well noted.
We remark that considering time-varying topologies leads to non-autonomous dynamical systems and hence an increasing difficulty to obtain rigorous results for their behavior. Our paper seems to be the first one that is mathematically rigorous, and also puts forward a sufficiently general framework to study related phenomena.

In \cite{Faggian2019}, the authors study the Kuramoto model on an Erd\"os-Renyi random graph $\cG(|V|,p)$ whose connectivity is resampled every $T$ units of time (called ``re-wiring''). Up to minor details, our \abbr{DRCM} model can be thought as a generalization of that model that obtains the one considered in \cite{Faggian2019} as a particular case when the skeleton is the complete graph, $\ep^{-1}=(1-p)T^{-1}$ and $\kappa=pT^{-1}$, with $p$ allowed to depend on $|V|$. In \cite{Faggian2019} the authors note, but do not rigorous prove, an averaging principle that leads, in the fast rewiring limit, to an all-to-all Kuramoto model and made the conclusion that fast enough rewiring facilitates synchronization. We note that their claim encompasses the case with inhomogeneous natural frequencies $\omega_i$, which are assumed to be i.i.d. mean-zero Gaussian, a situation not covered by our manuscript. We remark that obtaining conclusions about the stability of an equilibrium point of a system from the averaged limit equation is delicate since averaging principles hold in finite time intervals, while the stability of equilibrium points has to do intrinsically with the limit as time goes to infinity. Due to this fact we obtain results about synchronization from the averaged equation only in the homogeneous case, which is more tractable from a mathematical point of view.

In \cite{Uriu2013}, the authors study Kuramoto oscillators (``mobile agents'') on a discrete torus, where each agent is only connected to those others within Euclidean distance $r>0$. Time-dependence enters when with rate $\lambda>0$, each agent independently switches its spatial position with one of its neighbors. Making connections to continuum percolation, the authors note a phase transition between several different regimes in terms of the parameters $r,\lambda$ and the coupling constant $K$ between pairs of oscillators. Our \abbr{KRM} model contains this situation as a special case in which the skeleton is the graph obtained by linking each vertex of the discrete torus to others within distance $r$ (though the random walk mechanism is slightly different, cf. Remark \ref{rmk:generalization}).

Similarly, in \cite{Fujiwara2011} the authors consider a model in which the spatial motion of the agents follows Newtonian mechanics with their headings (i.e. the direction of their velocities) independently and uniformly resamplied every $\tau_M$ units of time. The connectivity of the oscillators is still dictated by spatial proximity. Remarkably, their emphasis is to argue that slow change of the network topologies increases the time to achieve synchronization. This corresponds to the regime of fixed (and not small) $\ep$ in our \abbr{KRM} model, whose precise behavior remains a conjecture for us.

More advanced studies also focus on the effect of (usually fast) time-varying topologies on the formation of more complicated stable patterns for Kuramoto model, such as chimera states \cite{Buscarino2015}. To conclude, these physics works certainly provide a wealth of inspirations for any future mathematically rigorous investigation of dynamic Kuramoto and related models.

\medskip

\section{Synchronization for fixed (arbitrary) $\ep>0$.}\label{sec.anyep}

We start this section with the proof of Theorem \ref{thm:basin}, that gives phase synchronization for a dynamic starting on the phase-cohesive set $\Delta(\gamma)$. {It applies to both \abbr{KRW} \eqref{dyn-km} and \abbr{DRC} \eqref{kuramoto.RCM}.} The basic idea is to exploit the fact that in $\Delta(\gamma)$ there is a uniform spectral gap for the Hessian of the potential that generates the dynamics as well as the fact that mentioned above that $\Delta(\gamma)$ is invariant for the dynamics disregarding of network topology. 

\begin{proof}[Proof of Theorem \ref{thm:basin}]
{We write the proof for \abbr{KRW}, and comment at the end for \abbr{DRC}.}
First we note the set $\Delta(\gamma)$ is convex, as can be easily checked from its definition \eqref{basin}. Next, we make the following claim: 
\[
\pmb{\theta}^\ep(0)\in\Delta(\gamma)\quad\Rightarrow\quad \pmb{\theta}^\ep(t)\in\Delta(\gamma)\text{ for all }t\ge 0.
\]
Indeed, we can argue as in \cite[Eq. $(4.9)$]{ling2020critical, dorfler2011critical}: fix any $t\ge 0$, assume that $\pmb{\theta}^\ep(t)\in\Delta(\gamma)$ holds, and let $i_{max}$, $i_{min}\in\{1,...,N\}$ be two indices such that
\[
|\theta^\ep_{i_{max}}(t)-\theta^\ep_{i_{min}}(t)|=\max_{i,j}|\theta^\ep_i(t)-\theta^\ep_j(t)|,
\]
where we recall that $|\cdot|$ denotes geodesic distance. Without loss of generality, we assume 
\[
\sin\big(\theta^\ep_{i_{max}}(t)-\theta^\ep_{i_{min}}(t)\big)\ge0,
\]
which implies that for any $j$
\[
\sin\big(\theta^\ep_{i_{max}}(t)-\theta^\ep_{j}(t)\big)\ge0, \quad 
\sin\big(\theta^\ep_{j}(t)-\theta^\ep_{i_{min}}(t)\big)\ge0.
\]
Now we have by \eqref{dyn-km} with $\pmb\omega=0$,
\begin{align*}
&\frac{d}{dt}\big(\theta^\ep_{i_{max}}(t)-\theta^\ep_{i_{min}}(t)\big)\\
&=-\sum_{j=1}^Na_{ij}(\bX^\ep(t))\big[\sin\big(\theta^\ep_{i_{max}}(t)-\theta^\ep_{j}(t)\big)+\sin\big(\theta^\ep_{j}(t)-\theta^\ep_{i_{min}}(t)\big)\big]\le 0,
\end{align*}
which yields our claim. 

Next, we rewrite \eqref{dyn-km} as a gradient flow
\begin{align}\label{grad-fl}
\frac{d}{dt} \theta^\ep_i(t)&=-\frac{K}{N}\partial_{\theta_i}\cU(\pmb{\theta}^\ep(t),\bX^\ep(t)), \quad i=1,...,N
\end{align}
where we denote the energy function
\begin{align}\label{energy}
\cU(\pmb{\theta},\bx):=\frac{1}{2}\sum_{i, j=1}^Na_{ij}(\bx)\big[1-\cos\big(\theta_j-\theta_i\big)\big].
\end{align}
The Hessian of $\cU$ with respect to $\pmb{\theta}$ has entries
\begin{align}\label{hessian}
&\partial^2_{\theta_i\theta_j}\cU(\pmb{\theta},\bx)\nonumber\\
&=-a_{ij}(\bx)\cos\big(\theta_j-\theta_i\big)1_{\{j\neq i\}}+\sum_{k\neq i}a_{ik}(\bx)\cos\big(\theta_k-\theta_i\big)1_{\{j=i\}}, \quad i,j=1,...N.
\end{align}

When $\pmb{\theta}\in\Delta(\gamma)$, the matrix \eqref{hessian} can be seen as the graph Laplacian of a fictitious graph $\cG^\bx$ on $N$ vertices $\{1,...,N\}$ with nonnegative symmetric weights $\wt{\pi}(i,j):=a_{ij}(\bx)\cos\big(\theta_j-\theta_i\big)$ between $i$ and $j$, hence it is always non-negative definite. Its smallest eigenvalue is $0$, with corresponding eigenvector $\pmb{1}_N$. In the case that $\cG^\bx$ is a connected graph, the second smallest eigenvalue of \eqref{hessian} is strictly positive, a well-known fact in graph theory, see e.g. \cite[Chapter 6]{bullo2020lectures}. Since there are only finitely many connected subgraphs of $\G$, 
and $\cos\big(\theta_j-\theta_i\big)\ge \cos\gamma>0$ in $\Delta(\gamma)$, the spectral gap of the Hessian can be taken to be uniformly positive whenever the fictitious graph $\cG^\bx$ is connected. We denote by $\wt c=\wt c(\gamma, \G)$ this spectral gap.

It now suffices to notice that the amount of time $t\in[0,\infty)$ when the fictitious graph $\cG^{\bX^\ep(t)}$ on $\{1,...,N\}$ induced by $\bX^\ep(t)$ is connected has infinite Lebesgue measure, $\P$-a.s. for any $\ep\in(0,\infty)$. Indeed, if we denote by $\cH$ the set of all connected subgraphs of $\G$, 
then $\cH$ is a positive recurrent set for $\bX^\ep(t)$ (here we mean the vertex set of each element of $\cH$). Since the jump rate of  $\bX^\ep(t)$ is constant $N\ep^{-1}$, various notions of recurrence are equivalent, and we have that 
\[
\int_0^\infty 1\{\bX^\ep(t)\in\cH\}dt=\infty, \quad \P\text{-a.s.}
\]
Clearly whenever $\{\bX^\ep(t)\in\cH\}$ happens, the fictitious graph $\cG^{\bX^\ep(t)}$ is connected.


Now we argue that although the energy landscape $\cU(\pmb{\theta},\bX^\ep(t))$ is time-varying and random, due to the uniform positive spectral gap of the Hessian of $\cU(\pmb{\theta},\bX^\ep(t))$ whenever $t$ is a time for which the fictitious graph $\cG^{\bX^\ep(t)}$ discussed above is connected, the dynamic will still converge to $\mathbf{0}$ as $t\to\infty$, provided initialized within $\Delta(\gamma)$. Here, without loss of generality we assumed that $\sum_{i=1}^N\theta^\ep_i(t)\equiv 0$ hence $\mathbf{0}$ is a fixed point of the dynamic and $\pmb{\theta}^\ep(t)$ is orthogonal to the eigenspace spanned by $\mathbf{1}_N$. We compute, using \eqref{grad-fl}
\begin{equation}\label{eq.dttheta}
\frac{1}{2}\frac{d}{dt}\|\pmb{\theta}^\ep(t)\|_2^2=\big\langle\pmb{\theta}^\ep(t), \frac{d}{dt}\pmb{\theta}^\ep(t)\big\rangle=-\frac{K}{N}\big\langle\pmb{\theta}^\ep(t), \nabla_{\pmb{\theta}}\cU(\pmb{\theta}^\ep(t),\bX^\ep(t))\big\rangle,
\end{equation}
where $\|\cdot\|_{p}:=\|\cdot\|_{\ell^p}$ for short. 

Since $\nabla_{\pmb{\theta}}\cU(\mathbf{0},\bX^\ep(t))=\mathbf 0$ as one can check, we have
\[
\nabla_{\pmb{\theta}}\cU(\pmb{\theta}^\ep(t),\bX^\ep(t))=\int_0^1\frac{d}{d\zeta}\nabla_{\pmb{\theta}}\cU(\zeta\pmb{\theta}^\ep(t),\bX^\ep(t))d\zeta =\int_0^1\text{Hess}_{\pmb{\theta}}\big[\cU(\zeta\pmb{\theta}^\ep(t),\bX^\ep(t))\big]d\zeta \cdot \pmb{\theta}^\ep(t).
\]
Furthermore, as discussed above the eigenvalues of $\text{Hess}_{\pmb{\theta}}$ are uniformly bounded by below by $\wt c$ when $\cG^{\bX^\ep(t)}$ is connected (and nonnegative in general), hence for any $\xi\in\R^N\setminus\{0\}$ the min-max theorem gives
\[
\Big\langle\xi,\int_0^1\text{Hess}_{\pmb{\theta}}\big[\cU(\zeta\pmb{\theta}^\ep(t),\bX^\ep(t))\big]d\zeta\cdot\xi\Big\rangle \geq \wt c 1\{\cG^{\bX^\ep(t)} \text{is connected}\}\|\xi\|_2^2.
\]
Putting together these two equations with \eqref{eq.dttheta} leads to
\begin{align*}
\frac{1}{2}\frac{d}{dt}\|\pmb{\theta}^\ep(t)\|_2^2&= -\frac{K}{N}\Big\langle\pmb{\theta}^\ep(t), \int_0^1\text{Hess}_{\pmb{\theta}}\big[\cU(\zeta\pmb{\theta}^\ep(t),\bX^\ep(t))\big]d\zeta \cdot \pmb{\theta}^\ep(t)\Big\rangle\\
&\le -\frac{\wt cK}{N}\; 1\{\cG^{\bX^\ep(t)} \text{is connected}\}\|\pmb{\theta}^\ep(t)\|_2^2.
\end{align*}
Above we have also used that $\Delta(\gamma)$ is convex, hence $\zeta\pmb{\theta}^\ep(t)$ lies in $\Delta(\gamma)$ whenever $\pmb{\theta}^\ep(t)$ does; 
and that if $\pmb{\theta}^\ep(0)\in\Delta(\gamma)$, then $\pmb{\theta}^\ep(t)\in\Delta(\gamma)$ for all $t\ge0$, established at the beginning. This yields our claim since
\[
\|\pmb{\theta}^\ep(t)\|_2^2\le \|\pmb{\theta}^\ep(0)\|_2^2 \exp\big\{-\frac{2\wt cK}{N} \int_0^t1\{\cG^{\bX^\ep(s)} \text{is connected}\}ds\big\}
\]
and $\int_0^\infty 1\{\cG^{\bX^\ep(t)} \text{is connected}\} dt =\infty$, $\P$-a.s. {This completes the proof for \abbr{KRW}.

Turning to \abbr{DRC}, note that besides simple notational changes, the only essential ingredient is that the set $\cH$ of all {\it{connected}} subgraphs of $\G$ is non-empty (in fact  $\G\in\cH$) and positive recurrent for the process $G^{\ep,\kappa}(t)=(V,\cE^{\ep,\kappa}(t))$, for any $\ep,\kappa\in(0,\infty)$. Since $\{\cE^{\ep,\kappa}(t)\}_{t\ge0}$ is a continuous-time irreducible Markov chain on a finite state space $\{0,1\}^{|E|}$, $\cH$ is positive recurrent. The jump rate of $\{\cE^{\ep,\kappa}(t)\}$ is at most $(\ep\vee\kappa)|E|$, thus positive recurrence is equivalent to having
\[
\int_0^\infty 1\{G^{\ep,\kappa}(t)\in\cH\}dt=\infty, \quad \P\text{-a.s.},
\]
and the proof follows.
}\end{proof}

\medskip

Next, we characterize the set of equilibria for both models: 

\begin{proof}[Proof of Proposition \ref{ppn:char-equil}]
(a) Consider \abbr{KRW}, by the positive recurrence of  $\{\bX^\ep(t)\}$, $\P$-a.s. there exists a collection of disjoint time-intervals $\{\Delta T^\ep_\bx\}_{\bx\in V^N}$ with $\Delta T^\ep_\bx\subset[0,\infty)$, such that
\begin{align*}
\bX^\ep(t)|_{t\in\Delta T^\ep_\bx}=\bx,
\end{align*}
whereby, for every $i$,
\begin{align*}
b_i(\cdot, \bX^\ep(t))|_{t\in\Delta T^\ep_\bx}=b_i(\cdot,\bx).
\end{align*}
Hence, if $\wh{{\pmb\theta}^\ep}$ is a fixed point of \eqref{dyn-km}, it must satisfy $b_i(\wh{{\pmb\theta}^\ep},\bx)=0$ for all $\bx\in V^N$ and $i$, $\P$-a.s. That is, $\wh{{\pmb\theta}^\ep}\in\bS_1$, $\P$-a.s.

Turning to \abbr{DRC}, by the positive recurrence of  $\{\cE^{\ep,\kappa}(t)\}$, $\P$-a.s. there exists a collection of disjoint time-intervals $\{\Delta T^{\ep,\kappa}_\cE\}_{\cE\subset E}$ with $\Delta T^{\ep,\kappa}_\cE\subset[0,\infty)$, such that
\begin{align*}
\cE^{\ep,\kappa}(t)|_{t\in\Delta T^{\ep,\kappa}_\cE}=\cE,
\end{align*}
whereby, for every $u\in V$,
\begin{align*}
b_u(\cdot, \cE^{\ep,\kappa}(t))|_{t\in\Delta T^{\ep,\kappa}_\cE}=b_u(\cdot,\cE).
\end{align*}
Hence, if $\wh{{\pmb\theta}^{\ep,\kappa}}$ is a fixed point of \eqref{kuramoto.RCM}, it must satisfy $b_u(\wh{{\pmb\theta}^{\ep,\kappa}},\cE)=0$ for all $\cE\subset E$ and $u\in V$, $\P$-a.s. That is, $\wh{{\pmb\theta}^{\ep,\kappa}}\in\bS_2$, $\P$-a.s.

(b) Restricting to the homogeneous setting, consider \abbr{KRW} and assume that there exists two vertices $u^*\neq v^*\in V$ such that $\pi(u^*,v^*)=0$. Recall also that    each vertex has a self-loop. Consider
\[
\bx_1:=(u^*,u^*,v^*,v^*,...,v^*)\in V^N
\]
i.e. all but its first two components are $v^*$, the first two being $u^*$. Then, if $\wh{{\pmb\theta}^\ep}\in\bS_1$, it must satisfy 
\begin{align*}
0=b_1(\wh{{\pmb\theta}^\ep},\bx_1)=\pi(u^*,u^*)\sin(\wh{{\theta}^\ep_2}-\wh{{\theta}^\ep_1}),
\end{align*}
implying that $|\wh{{\theta}^\ep_2}-\wh{{\theta}^\ep_1}|\in\{0,\pi\}$. Permuting the components of $\bx_1$, we can get $|\wh{{\theta}^\ep_j}-\wh{{\theta}^\ep_i}|\in\{0,\pi\}$ for every $i,j$. That is, $\wh{{\pmb\theta}^\ep}\in\bS$. The other direction $\bS\subset \bS_1$ is simple.

Turning to \abbr{DRC}, for any pair of vertices $u\neq v\in V$ such that $\pi(u,v)> 0$ in the skeleton, consider a subgraph of the skeleton graph whose edge set is given by
\[
\cE_{u,v}=\{(u,v)\}
\]
i.e. it consists of a single edge $(u,v)$. If $\wh{{\pmb\theta}^{\ep,\kappa}}\in\bS_2$, then it must satisfy
\begin{align*}
0=b_u(\wh{{\pmb\theta}^{\ep,\kappa}},\cE_{u,v})=\pi(u,v)\sin(\wh{{\theta}^{\ep,\kappa}_v}-\wh{{\theta}^{\ep,\kappa}_u}),
\end{align*}
implying that $|\wh{{\theta}^{\ep,\kappa}_v}-\wh{{\theta}^{\ep,\kappa}_u}|\in\{0,\pi\}$. Since the above $u,v$ with $\pi(u,v)>0$ is arbitrary, and the skeleton is connected, we get $\wh{{\pmb\theta}^{\ep,\kappa}}\in\bS$. The other direction $\bS\subset \bS_2$ is simple.
\end{proof}

Under the condition of part (b) of Proposition \ref{ppn:char-equil}, the set of possible fixed points of \eqref{dyn-km} or \eqref{kuramoto.RCM} is explicitly given by (up to a global translation)
\begin{align*}
\bS=\{\pmb\theta:\; \theta_i\in\{0,\pi\}, \; \forall i\}.
\end{align*}
Since the set $\bS$ is deterministic and has finite cardinality, it is of interest to investigate the stability of each of the elements of $\bS$. We adopt the following special notions of stability and instability for our dynamic problem. In particular, the notion of instability is very weak. See Conjecture \ref{conj:attractor} in this regard.
\begin{definition}
For the model \eqref{dyn-km} with $\pmb\omega=\mathbf0$ (and analogously for \eqref{kuramoto.RCM} with simple notation changes), we say that

(a) a fixed point $\pmb\theta^\star\in\bS$ is asymptotically stable if there exists some $\delta>0$ such that for any $t_0\ge0$, $\P$-a.s. we have that 
\begin{align*}
\|\pmb\theta^\ep(t_0)-\pmb\theta^\star\|<\delta\quad \Rightarrow \quad \lim_{t\to\infty}\pmb\theta^\ep(t)=\pmb\theta^\star,
\end{align*}
where $\|\pmb\theta\|:=\max_i|\theta_i|$.

(b) a fixed point $\pmb\theta^\star\in\bS$ is weakly unstable if there exists some $\delta>0$ such that for any $\iota>0$, $\P$-a.s. there exists $t_0\ge0$ and an initial condition satisfying $\|\pmb\theta^\ep(t_0)\|<\iota$ so that 
\begin{align*}
\sup_{t\ge t_0}\|\pmb\theta^\ep(t)-\pmb\theta^\star\|>\delta.
\end{align*}
\end{definition}

\begin{proposition}\label{prop:stab}
For \eqref{dyn-km} and \eqref{kuramoto.RCM} with $\pmb\omega=\mathbf0$, all elements of $\bS$ except for $\mathbf 0$ are weakly unstable, while $\mathbf 0$ is asymptotically stable.
\end{proposition}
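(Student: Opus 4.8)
The plan is to prove the two halves separately, getting the stability of $\mathbf0$ essentially for free from Theorem \ref{thm:basin} and establishing weak instability of the other equilibria by finding, for almost every realization of the graph process, a single long time window on which the frozen dynamics pushes the trajectory away from $\pmb\theta^\star$.

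\textbf{Asymptotic stability of $\mathbf0$.} First I would fix $\delta\in(0,\pi/4)$ and set $\gamma:=2\delta<\pi/2$. If at a deterministic time $t_0$ one has $\|\pmb\theta^\ep(t_0)-\mathbf0\|=\max_i|\theta^\ep_i(t_0)|<\delta$, then $\max_{i,j}|\theta^\ep_i(t_0)-\theta^\ep_j(t_0)|\le 2\delta=\gamma$, i.e.\ $\pmb\theta^\ep(t_0)\in\Delta(\gamma)$. By the Markov property at the deterministic time $t_0$, $\{\bX^\ep(t_0+s)\}_{s\ge0}$ is again a \abbr{KRW} started at $\bX^\ep(t_0)$, so the proof of Theorem \ref{thm:basin} applies verbatim from $t_0$ onward: $\Delta(\gamma)$ is invariant and $\int_{t_0}^\infty 1\{\cG^{\bX^\ep(s)}\text{ connected}\}\,ds=\infty$ a.s., hence $\pmb\theta^\ep(t)\to\mathbf0$. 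The same $\delta$ works for every $t_0$, which is precisely asymptotic stability; \abbr{DRC} is identical.

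\textbf{A destabilizing frozen configuration.} Let $\pmb\theta^\star\in\bS\setminus\{\mathbf0\}$. Up to translation its coordinates attain both values $0$ and $\pi$, so I would pick indices $i_0,j_0$ with $\theta^\star_{i_0}=0$ and $\theta^\star_{j_0}=\pi$. Using the standing assumption that $\G$ is not complete, choose $u^*\neq v^*$ with $\pi(u^*,v^*)=0$ and define $\bx^*\in V^N$ by $x^*_{i_0}=x^*_{j_0}=u^*$ and $x^*_k=v^*$ for $k\neq i_0,j_0$. Since $\pi(u^*,v^*)=0$, in the frozen system $\bX^\ep\equiv\bx^*$ the agents $i_0,j_0$ are coupled only to each other (through the self-loop $\pi(u^*,u^*)>0$), so their relative phase $\phi:=\theta_{j_0}-\theta_{i_0}$ obeys the autonomous scalar equation $\dot\phi=-\tfrac{2K}{N}\pi(u^*,u^*)\sin\phi$. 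The equilibrium $\phi=\pi$, which $\pmb\theta^\star$ realizes, is a repeller: for $\phi\in(0,\pi)$ one has $\dot\phi<0$, so $\phi$ decreases monotonically toward $0$. For \abbr{DRC} I would instead take $\cE^*=\{(u_0,v_0)\}$ a single skeleton edge joining the two clusters (it exists because $\G$ is connected) and obtain the identical equation for $\phi=\theta_{v_0}-\theta_{u_0}$.

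\textbf{Escape on one long visit.} Set $\delta:=\pi/8$. Given $\iota>0$, pick $\eta_0\in(0,\iota)$ and initialize at the perturbed point $\theta_{i_0}(t_0)=\eta_0/2$, $\theta_{j_0}(t_0)=\pi-\eta_0/2$, $\theta_k(t_0)=\theta^\star_k$ otherwise, so that $\|\pmb\theta^\ep(t_0)-\pmb\theta^\star\|<\iota$. While the graph stays at $\bx^*$ the trajectory is the frozen flow, $\theta_{i_0}+\theta_{j_0}$ is conserved, and $\phi$ drops from $\pi-\eta_0$ to $\pi/2$ in the finite time $T(\eta_0)=\tfrac{N}{2K\pi(u^*,u^*)}\int_{\pi/2}^{\pi-\eta_0}\tfrac{d\phi}{\sin\phi}<\infty$; at that instant $|\theta_{j_0}-\theta^\star_{j_0}|=\pi/4>\delta$. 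To produce such a $t_0$ a.s., note that $\bX^\ep$ is irreducible on the finite set $V^N$ (resp.\ $\{\cE^{\ep,\kappa}(t)\}$ on $\{0,1\}^{|E|}$), so $\bx^*$ (resp.\ $\cE^*$) is positive recurrent and visited infinitely often, with i.i.d.\ exponential holding times of a fixed positive rate by the strong Markov property; hence a.s.\ infinitely many of these exceed $T(\eta_0)$. Taking $t_0$ to be the start of one such visit gives $\sup_{t\ge t_0}\|\pmb\theta^\ep(t)-\pmb\theta^\star\|\ge\pi/4>\delta$, which is weak instability.

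\textbf{Main obstacle.} The hard part is that, because the interaction graph keeps switching, one cannot conclude escape from $\pmb\theta^\star$ from a single global Hessian or Lyapunov computation: in some configurations $\pmb\theta^\star$ is attracting, so a naive argument would allow the trajectory to return. My construction sidesteps this by exploiting the weakness of the instability notion — escape is demanded only for one initial datum and one realization-dependent time $t_0$ — together with the fact that a fixed destabilizing configuration is occupied for arbitrarily long single stretches, on which the dynamics is deterministic and the relevant one-dimensional relative phase leaves the neighborhood monotonically. The secondary nuisance, namely the continua of equilibria coming from the translation symmetry $\pmb\theta\mapsto\pmb\theta+c\mathbf1_N$ and from the degeneracy of $\cU(\cdot,\bx^*)$, is absorbed automatically by the reduction to the scalar variable $\phi$, whose flow is nondegenerate at $\phi=\pi$.
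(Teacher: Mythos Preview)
Your argument is correct and shares the paper's overall architecture: asymptotic stability of $\mathbf0$ is read off from Theorem~\ref{thm:basin}, and weak instability of each $\pmb\theta^\star\in\bS\setminus\{\mathbf0\}$ is obtained by fixing a single frozen graph configuration at which $\pmb\theta^\star$ is unstable for the static flow, then invoking positive recurrence together with the fact that i.i.d.\ exponential holding times are a.s.\ unbounded to find one visit long enough for escape.

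Where you differ is in how the escape at the frozen configuration is established. The paper picks any $\bx$ in which some pair $(i,j)$ from opposite clusters of $\pmb\theta^\star$ are adjacent, computes the Hessian $\partial^2\cU(\pmb\theta^\star,\bx)$, and exhibits the global unstable direction $\bv$ with $v_i=1\{\theta_i^\star=0\}-1\{\theta_i^\star=\pi\}$ via the identity $\bv^{T}\partial^2\cU(\pmb\theta^\star,\bx)\bv=-2\sum_{i,j}a_{ij}(\bx)1\{\theta_i^\star\neq\theta_j^\star\}<0$; the exit time $\tau(\iota,\bv)$ then exists abstractly. You instead engineer $\bx^*$ (resp.\ $\cE^*$) so that a single opposite-cluster pair is \emph{completely decoupled} from the remaining oscillators, which lets you collapse the relevant dynamics to the scalar equation $\dot\phi=-c\sin\phi$ and read off the exit time by quadrature. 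Your route is more elementary---no linearization or spectral step---and yields an explicit $T(\eta_0)$ and an explicit escape distance $\pi/4$; it does lean on the non-completeness of $\G$ for \abbr{KRW} (already a standing hypothesis for the characterization $\bS_1=\bS$) to manufacture the decoupling, whereas the paper's Hessian computation applies to any frozen configuration that links the two clusters. Both arguments ultimately exploit the same probabilistic ingredient, namely that a fixed state of the graph process is visited for arbitrarily long single stretches.
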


\begin{proof}
We write the proof for \abbr{KRW} \eqref{dyn-km}, and the proof for \abbr{DRC} \eqref{kuramoto.RCM} is similar.

(a) For any $t_0\ge0$, Theorem \ref{thm:basin} applied to the time-shifted processes $\{\bX^\ep(\cdot+t_0),\pmb\theta^\ep(\cdot+t_0)\}$ implies that $\mathbf 0$ is asymptotically stable. Indeed, one can choose any $\delta <\pi/4$.

(b) Fix any $\pmb\theta^\star=(\theta^\star_1,...,\theta^\star_N)\in\bS\backslash\{\mathbf 0\}$ and a vector $\bx=(x_1,...,x_N)\in V^N$ such that the subgraph $\cG^\bx$ induced by $\bx$ has the following property: there exist $i,j$ such that $\pi(x_i,x_j)>0$ and $\theta^\star_i=0$, $\theta^\star_j=\pi$. Now we claim that $\pmb\theta^\star$ is an unstable fixed point for the static Kuramoto model with coefficients given by $a(\bx)$. Indeed, the Hessian matrix at $\pmb\theta^\star$ of the energy function \eqref{hessian} is not nonnegative definite. Consider $\bv=(v_1,v_2,...,v_N)$ given by  
\begin{align}\label{unstable-dir}
 v_i = 1\{\theta_i^\star=0\} - 1\{\theta_i^\star = \pi \}, \quad i=1,...,N.
\end{align}
A direct computation (already present in \cite[Prop. 4.1]{canale2008almost}) shows that
\begin{align*}
\mathbf v^T(\partial^2 \cU(\pmb \theta^\star,\bx) )\mathbf v &= -2\sum_{i,j=1}^N a_{ij}(\bx) 1\{\theta_i^\star \ne \theta_j^\star \}\\
&= -2\sum_{i,j=1}^N \pi(x_i,x_j) 1\{\theta_i^\star \ne \theta_j^\star \}<0.
\end{align*}
That is, $\bv$ is an unstable direction for the static Kuramoto model with coefficients $a(\bx)$. 

Fix $\delta=1$. For such static Kuramoto model, given any $\iota>0$, if we choose initial condition to be $\pmb\theta^\star+\iota \bv/2$, then due to instability the solution will exit a neighborhood of radius $\delta$ in finite time. We denote by $\tau(\iota,\bv)<\infty$ such an exit time. 

Now consider our dynamic problem. By the positive recurrence of $\{\bX^\ep(t)\}$, $\P$-a.s. there exists an infinite sequence of disjoint time-intervals 
\begin{align*}
\Lambda^\ep_{k,\bx}:=[t^\ep_{k,\bx}, t^\ep_{k,\bx}+\zeta^\ep_k)\subset[0,\infty),\quad k\in\N
\end{align*}
with $\{\zeta^\ep_k\}_{k\in\N}$ independent exponential random variables of mean $N^{-1}\ep$, such that 
\begin{align*}
\bX^\ep(t)|_{t\in \Lambda^\ep_{k,\bx}}=\bx.
\end{align*}
Given any $\iota>0$, by properties of i.i.d. exponential random variables, $\P$-a.s. there exists some $k_0\in\N$ such that $\zeta^\ep_{k_0}>\tau(\iota,\bv)$. Thus, if we choose $t_0:=t^\ep_{k_0,\bx}$ and $\pmb\theta^\ep(t_0)=\pmb\theta^\star+\iota \bv/2$, then we have both $\|\pmb\theta^\ep(t_0)\|<\iota$ and $\sup_{t\in[t_0,t_0+\tau(\iota,\bv)]}\|\pmb\theta^\ep-\pmb\theta^\star\|>\delta$. That is, we have shown that $\pmb\theta^\star$ is weakly unstable.
\end{proof}

Despite the rather weak conclusion regarding the instability of each of $\pmb\theta^\star\in\bS\backslash\{\mathbf 0\}$ that we obtained, we believe that much stronger is true. The difficulty is that although we have demonstrated in the above proof a non-random and time-independent direction $\bv$ that is unstable for $\pmb\theta^\star$ and all static Kuramoto problems (subject to a certain connectivity assumption), we are not able to prove that for our dynamic problem, the trajectory of the solution will not come back once it has left. Furthermore, we cannot handle the situation when the aforementioned connectivity assumption fails, and thus the direction $\bv$ is not necessarily unstable. The following conjecture states that for our dynamic problem, $\mathbf0$ attracts the whole phase space except for $\bS\backslash\{\mathbf0\}$, and there is even no stable manifolds around the latter set of spurious fixed points.

\begin{conjecture}\label{conj:attractor}
For the model \eqref{dyn-km} with $\pmb\omega=\mathbf 0$ (and analogously for \eqref{kuramoto.RCM} with notation changes), for any $\ep\in(0,\infty)$ and any deterministic choice of $\pmb\theta^\ep(0)\in\T^N\backslash\bS$, $\P$-a.s.  we have that (up to a global translation)
\begin{align*}
\lim_{t\to\infty}\pmb\theta^\ep(t)=\mathbf 0.
\end{align*}
\end{conjecture}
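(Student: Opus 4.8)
The plan is to reduce the conjecture to a finite-time hitting statement and then exploit the pathwise monotonicity of the phase spread together with the recurrence of a fully-merged graph configuration. First I would set $W(\pmb\theta):=\max_{i,j}|\theta_i-\theta_j|$ and record a purely deterministic, configuration-independent fact: whenever $W(\pmb\theta)<\pi$ the $N$ phases lie in an open arc of length $<\pi$, so choosing the two extreme indices $i_{\max},i_{\min}$ one has $\sin(\theta_j-\theta_{i_{\max}})\le 0$ and $\sin(\theta_j-\theta_{i_{\min}})\ge 0$ for every $j$; exactly as in the proof of Theorem \ref{thm:basin} this gives $\dot\theta_{i_{\max}}\le 0\le\dot\theta_{i_{\min}}$, and an envelope argument for the upper Dini derivative of the maximum yields $D^+W\le 0$ for \emph{every} realization of $\bX^\ep$. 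Hence each sublevel set $\{W\le c\}$ with $c<\pi$ is forward invariant, irrespective of the random walk. Using the strong Markov property and the forward invariance of $\Delta(\gamma)$, it then suffices to prove that $\P$-a.s. the first hitting time $\tau$ of $\Delta(\gamma)$ (for some fixed $\gamma<\pi/2$) is finite: on $\{\tau<\infty\}$ the post-$\tau$ process is again a \abbr{KRW} started inside $\Delta(\gamma)$, and Theorem \ref{thm:basin} drives it to $\mathbf 0$.

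The engine for reaching $\Delta(\gamma)$ is the ``all-merged'' configuration $\bx^\ast:=(u_0,\dots,u_0)$ for a fixed vertex $u_0$. Because every vertex carries a self-loop, under $\bx^\ast$ all pairs are coupled and \eqref{dyn-km} reduces to the \emph{complete-graph} Kuramoto flow, for which the squared order parameter $R^2=N^{-2}|\sum_i e^{\mathbf i\theta_i}|^2$ is a strict Lyapunov function: the flow increases $R^2$, its only critical points are the balanced states ($R=0$, unstable for the ascent) and the antipodal points of $\bS$, so from every configuration with $R>0$ that is not on a stable manifold of such a spurious critical point the complete-graph flow converges to $\mathbf 0$; note $\{W<\pi\}\subset\{R>0\}$. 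By positive recurrence the process spends, infinitely often, arbitrarily long sojourn intervals at $\bx^\ast$ (their lengths are i.i.d. exponentials, exactly as in the proof of Proposition \ref{prop:stab}), and during such an interval the trajectory follows the complete-graph flow and is pulled toward $\mathbf 0$. The statement $\{\tau<\infty\}$ would then follow if some sojourn at $\bx^\ast$ were long enough to carry the state into $\Delta(\gamma)$; since $W$ can never increase, the intervening motion under other configurations can only help.

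Before invoking $\bx^\ast$ one must first enter $\{W<\pi\}$. If $\pmb\theta^\ep(0)\notin\bS$ but $W(\pmb\theta^\ep(0))=\pi$, I would leave the antipodal boundary in finite time by fixing a pair $i,j$ with $|\theta_i-\theta_j|\notin\{0,\pi\}$, using recurrence to a configuration coupling this pair, and arguing as in the weak-instability computation of Proposition \ref{prop:stab} (the direction $\bv$ of \eqref{unstable-dir}) that the spread is strictly pushed below $\pi$; thereafter the monotonicity of $W$ keeps it there for all later times.

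The hard part is precisely the point at which the authors stop: controlling the \emph{slow} dynamics near the non-synchronized critical sets. The complete-graph flow synchronizes only from outside the (measure-zero but nonempty) stable manifolds of the balanced states $\{R=0\}$ and of the spurious points $\bS\setminus\{\mathbf 0\}$, and these states, although never \emph{common} equilibria by Proposition \ref{ppn:char-equil}, are genuine equilibria of individual configurations; near them every vector field in the family is small, so the time needed for $\bx^\ast$ to expel the trajectory is not bounded uniformly over the compact phase space, and switching could in principle return the trajectory to their vicinity again and again. Upgrading the \emph{weak} instability of Proposition \ref{prop:stab} (the trajectory leaves a neighborhood once) to a genuine no-return/escape estimate, uniform over initial data, is the crux. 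A clean way to organize this would be a random LaSalle invariance principle: show that the $\omega$-limit set of $\pmb\theta^\ep(t)$ is $\P$-a.s. contained in the common-equilibrium set $\bS$, by accounting for the strictly positive decrement of $\cU(\cdot,\bx)$ accumulated over the recurrent sojourns at each $\bx$ for which $\nabla_{\pmb\theta}\cU(\cdot,\bx)\ne 0$ near a putative limit point, and then excluding every point of $\bS\setminus\{\mathbf 0\}$ using the uniform unstable direction $\bv$. Quantifying this accumulated energy decrement against the possible energy gains at jumps, so as to force the limit set down to $\{\mathbf 0\}$, is where I expect the real work, and the genuine difficulty of the conjecture, to lie.
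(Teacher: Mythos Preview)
This statement appears in the paper as an open \emph{conjecture}; the authors give no proof and explain in the preceding paragraph why the obstacle is genuine. So there is no paper proof to compare your proposal against, and your attempt should be read as an attack on the open problem itself.

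Your strategy rests on a false claim. You assert that ``whenever $W(\pmb\theta)<\pi$ the $N$ phases lie in an open arc of length $<\pi$'', and deduce that $D^+W\le 0$ under every coupling configuration. The implication fails already for $N=3$: take $\pmb\theta=(0,\,0.6\pi,\,1.3\pi)$; the pairwise geodesic distances are $0.6\pi,\,0.7\pi,\,0.7\pi$, so $W=0.7\pi<\pi$, yet the shortest arc containing all three phases has length $1.3\pi>\pi$. A direct computation for this $\pmb\theta$ under the complete-graph coupling shows that both maximizing pairs have \emph{increasing} geodesic distance, hence $D^+W>0$. The equivalence between $\Delta(\gamma)$ and ``all phases in an arc of length $\le\gamma$'' holds only for $\gamma<\pi/2$, which is precisely why Theorem~\ref{thm:basin} is stated in that range; your appeal to its proof for larger $\gamma$ is an unwarranted extrapolation. (Relatedly, your inclusion $\{W<\pi\}\subset\{R>0\}$ is also false: the splay state $(0,2\pi/3,4\pi/3)$ has $W=2\pi/3<\pi$ and $R=0$.)

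This error is fatal for the ratcheting scheme: you use the purported monotonicity of $W$ to argue that motion under configurations other than $\bx^\ast$ ``can only help'', so that it suffices to wait for a long enough sojourn at $\bx^\ast$. Without monotonicity nothing prevents the spread from growing back between visits, and you are thrown back onto exactly the difficulty the authors highlight. Replacing $W$ by the length $L$ of the shortest containing arc does restore monotonicity on $\{L<\pi\}$, but then the complementary region $\{L\ge\pi\}$ is open and contains every balanced/splay state as well as all of $\bS\setminus\{\mathbf 0\}$; escaping it is the whole problem. Your closing paragraph correctly names the slow dynamics near spurious critical sets as the crux, but the reduction you propose before that point does not stand.
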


\section{An averaging principle for the \abbr{KRW}}\label{sec.aver}

In this section, we prove an averaging principle, as $\ep\to0$, for \abbr{KRW} \eqref{dyn-km}, i.e. Proposition \ref{prop:ave}. Recall the averaged equation \eqref{ave-rate}. First, we make some remarks and state a lemma and a proposition that will be used in the proof.

\begin{remark}\label{rmk:ave-km}
If $\pmb\omega=\mathbf0$, \eqref{ave-km} is a homogeneous Kuramoto model on a complete graph of $N$ vertices $\{1,...,N\}$ with equal positive weights $K\ovl{a}N^{-1}$, and it is well-known (cf. \cite[Corollary 17.6]{bullo2020lectures}, \cite{taylor2012there}, \cite[Section 3]{ling2019landscape}) that the basin of attraction $\Omega$ of the point $\mathbf{0}$ is the whole phase space $\T^N$ minus a set $\cN$ of Lebesgue measure zero. Hence the only stable equilibrium for the dynamics is $\mathbf{0}$ (up to a global translation). The set $\cN$ is the union of the stable manifolds of all the unstable equilibria, explicitly described in the proof of \cite[Corollary 17.6]{bullo2020lectures}.
\end{remark}

\begin{remark}
We observe that equation \eqref{ave-km} is similar to the one obtained in \cite[Eq. $(4.2)$]{medvedev2018continuum} but the averaging principle we obtain is different from the one obtained by Medvedev. While Medvedev's averaging principle is over the nodes of the graph our averaging is in time. In Medvedev's context the number of nodes grows to infinity while in ours the parameter going to infinity is the speed of the random walkers, keeping the number of nodes in the skeleton graph fixed.
\end{remark}

\begin{lemma}\label{rmk:lip}

(a) We have that $\ovl{a}\le A$ and $|\ovl{b}_i(\pmb{\theta})|\le KA+\|\pmb{\omega}\|_\infty$, for any $\pmb{\theta}, i$. \\

\noindent
(b) The function $\pmb{\theta}\mapsto \ovl{b}_i(\pmb{\theta})$ is Lipschitz and $\pmb{\theta}\mapsto b_i(\pmb{\theta},\bx)$ is uniformly (in $\bx$) Lipschitz. 
\noindent
\end{lemma}

\begin{proof}
(a) The first statement holds since $\mu(v)\le 1$ and
\[
\sum_{u,v\in V}\pi(u,v)\mu(u)\mu(v)\le \sum_{u,v\in V}\pi(u,v)\mu(u)\le \sum_{u\in V}\mu(u)\pi(u)\le A.
\]
The second one is immediate.

(b) Indeed, for any $j$ we have that
\begin{align*}
|\partial_{\theta_j}\ovl{b}_i(\pmb{\theta})|=\frac{K}{N}\Big|\ovl{a}\cos\big(\theta_j-\theta_i\big)1_{\{j\neq i\}}-\sum_{k\neq i}\ovl{a}\cos\big(\theta_k-\theta_i\big)1_{\{j=i\}}\Big|\le KA,
\end{align*}
implying that for some finite constant $C=C(K,N,A)$, $|\ovl{b}_i(\pmb{\theta})-\ovl{b}_i(\pmb{\theta'})|\le C\|\pmb{\theta}-\pmb{\theta'}\|_2$. Similar argument shows that $\pmb{\theta}\mapsto b_i(\pmb{\theta},\bx)$ is also uniformly Lipschitz.\\

\end{proof}

\begin{proposition} For the Kuramoto Random Walk $\{\bX^\ep(t)\}_{t\ge 0}$ we have, 
\[
\lim_{\ep\to0}\sup_{\pmb{\theta}\in\T^N}\E\left|\delta^{-1}\int_{0}^{\delta}\big[b_i(\pmb{\theta}, \bX^\ep(t))dt-\ovl{b}_i(\pmb{\theta})\big]dt\right|=0.
\]
\end{proposition}

\begin{proof}
 The process $\{\bX^\ep(t)\}$ has a unique invariant probability measure, namely the product measure $\mu^{\otimes N}$. Observe that the dependence of $\{\bX^\ep(t)\}$ in $\ep$ is only through the jump rates. In fact $\{\bX^\ep(t)\} = \{\bX^1(t\ep^{-1})\}$. Hence, the invariant measure is independent of $\ep$ and in a $\delta$-unit of time, for any fixed $\delta>0$, the number of jumps diverges with $\ep^{-1}$ a.s. By Ergodic theorem of continuous time finite Markov chains, for every $i,j$ and $\delta>0$, we have that
 \begin{align*}
 \lim_{\ep\to0}\;\delta^{-1}\int_{0}^{\delta}a_{ij}(\bX^\ep(t))dt&=\lim_{\ep\to0}\;\delta^{-1}\int_{0}^{\delta}\sum_{u,v\in V}\pi(u,v)1_{\{X^\ep_i(t)=u,X^\ep_j(t)=v\}}dt\\
&= \lim_{(\delta/\ep)\to \infty}\;\frac{1}{\delta/\ep} \int_{0}^{\delta/\ep}\sum_{u,v\in V}\pi(u,v)1_{\{X^1_i(t)=u,X^1_j(t)=v\}}dt\\
&=\sum_{u,v\in V}\pi(u,v)\mu(u)\mu(v)=\ovl{a}, \quad \P\text{-a.s.}
\end{align*}
which implies convergence in $L^1(\P)$ by the dominated convergence theorem. Since the state space $V^N$ is finite, the rate of convergence in $L^1$ can be taken uniform in the starting state. Indeed, for any starting distribution $\bX^\ep(0)\sim\nu$, we have
\begin{align*}
\E^\nu\big|\int_{0}^{\delta}\big[a_{ij}(\bX^\ep(t))-\ovl{a}\big]dt\big|\le \max_{\bx\in V^N}\E^\bx\big|\int_{0}^{\delta}\big[a_{ij}(\bX^\ep(t))-\ovl{a}\big]dt\big|,
\end{align*}
where $\E^\bx$ indicates conditioning on $\bX^\ep(0)=\bx$.
It also implies that for any $i$, $\pmb{\theta}\in\T^N$ and $\delta>0$, we have
\begin{align}
&\limsup_{\ep\to0}\E\big|\delta^{-1}\int_{0}^{\delta}\big[b_i(\pmb{\theta}, \bX^\ep(t))dt-\ovl{b}_i(\pmb{\theta})\big]dt\big| \label{conv-b}\\
&= \limsup_{\ep\to0}\E\big|\sum_{j=1}^N\delta^{-1}\int_{0}^{\delta}\big[a_{ij}(\bX^\ep(t))-\ovl{a}\big]dt\sin\big(\theta_j-\theta_i\big)\big|\nonumber\\
&\le \lim_{\ep\to0}\sum_{j=1}^N\E\big|\delta^{-1}\int_{0}^{\delta}\big[a_{ij}(\bX^\ep(t))-\ovl{a}\big]dt\big|=0.\nonumber
\end{align}
Further, note that
\begin{align*}
\pmb{\theta}\mapsto \E\big|\sum_{j=1}^N\delta^{-1}\int_{0}^{\delta}\big[a_{ij}(\bX^\ep(t))-\ovl{a}\big]dt\sin\big(\theta_j-\theta_i\big)\big|
\end{align*}
is uniformly continuous (with Lipschitz constant depending only on $N,A$), and $\T^N$ is compact, hence the convergence in \eqref{conv-b} can be taken to be uniform in $\pmb{\theta}$, i.e.
\begin{align}\label{unif-conv-b}
&\lim_{\ep\to0}\sup_{\pmb{\theta}\in\T^N}\E\big|\delta^{-1}\int_{0}^{\delta}\big[b_i(\pmb{\theta}, \bX^\ep(t))dt-\ovl{b}_i(\pmb{\theta})\big]dt\big|=0.
\end{align}
\end{proof}

We can now give the 
\begin{proof}[Proof of Proposition \ref{prop:ave}]
The proof strategy is well established in the averaging literature (cf. \cite[Chapter 2, Sections 1]{freidlin1998random}), we give the details for completeness. Assume $\pmb{\theta}^{\ep}(0)=\ovl{\pmb{\theta}}(0)=\pmb{\alpha}\in\T^N$. We divide $[0,T]$ into subintervals of length $\delta>0$ of the form $[k\delta,(k+1)\delta)$ for integers $k=0,1,...,[T/\delta]$. On each subinterval $t\in[k\delta,(k+1)\delta)$ and for any $i$ we have that
\begin{align*}
\theta^\ep_i(t)&=\theta^\ep_i(k\delta)+\omega_i(t-k\delta)+\int_{k\delta}^tb_i(\pmb{\theta}^\ep(s),\bX^\ep(s))ds\\
\ovl{\theta}_i(t)&=\ovl{\theta}_i(k\delta)+\omega_i(t-k\delta)+\int_{k\delta}^t\ovl{b}_i(\ovl{\pmb{\theta}}(s))ds
\end{align*}
hence, for any $t\in[k\delta,(k+1)\delta)$, taking the difference of the above, we get
\begin{align*}
\theta^\ep_i(t)-\ovl{\theta}_i(t) &=  [\theta^\ep_i(k\delta)-\ovl{\theta}_i(k\delta)]   +\int_{k\delta}^t[b_i(\pmb{\theta}^\ep(k\delta),\bX^\ep(s))-\ovl{b}_i(\pmb{\theta}^\ep(k\delta))]ds \\
								  & +\int_{k\delta}^t[\ovl{b}_i(\pmb{\theta}^\ep(s))-\ovl{b}_i(\ovl{\pmb{\theta}}(s))]ds\\
								  &+\int_{k\delta}^t[b_i(\pmb{\theta}^\ep(s),\bX^\ep(s))-b_i(\pmb{\theta}^\ep(k\delta),\bX^\ep(s))]ds \\
								  & -\int_{k\delta}^t[\ovl{b}_i(\pmb{\theta}^\ep(s))-\ovl{b}_i(\pmb{\theta}^\ep(k\delta))]ds.
\end{align*}
Recall Lemma \ref{rmk:lip} that $b_i, \ovl{b}_i$ are uniformly bounded and  uniformly Lipschitz in $\pmb{\theta}$, hence the third integral above can be bounded in absolute value as follows:
\begin{align*}
&\int_{k\delta}^t|b_i(\pmb{\theta}^\ep(s),\bX^\ep(s))-b_i(\pmb{\theta}^\ep(k\delta),\bX^\ep(s))|ds\le C(K,N,A)\int_{k\delta}^t|\pmb{\theta}^\ep(s)-\pmb{\theta}^\ep(k\delta)|ds\\
&\le C(K,N,A,\pmb{\omega})\int_{k\delta}^t(s-k\delta)ds\le C(t-k\delta)^2,
\end{align*}
and a similar computation holds for the last integral. Taking $t=(k+1)\delta$, using $\ovl{b}_i$ Lipschitz, we have that
\begin{align*}
|\theta^\ep_i((k+1)\delta)-\ovl{\theta}_i((k+1)\delta)|
&\le |\theta^\ep_i(k\delta)-\ovl{\theta}_i(k\delta)|+
\big|\int_{k\delta}^{(k+1)\delta}\big[b_i(\pmb{\theta}^\ep(k\delta),\bX^\ep(s))-\ovl{b}_i(\pmb{\theta}^\ep(k\delta))\big]ds\big|\\
&+C\int_{k\delta}^{(k+1)\delta}\|\pmb{\theta}^\ep(s)-\ovl{\pmb{\theta}}(s)\|_2ds+C\delta^2.
\end{align*}
For any $t\in[0,T]$, we sum the previous over $k=0, 1,...,[t/\delta]-1$, for some $C=C(K,N,A,\pmb{\omega})$ we have that 
\begin{align*}
|\theta^\ep_i(t)-\ovl{\theta}_i(t)|&\le \sum_{k=0}^{[t/\delta]}\big|\int_{k\delta}^{(k+1)\delta}\big[b_i(\pmb{\theta}^\ep(k\delta),\bX^\ep(s))-\ovl{b}_i(\pmb{\theta}^\ep(k\delta))\big]ds\big|+Ct\delta+C\int_{0}^t\|\pmb{\theta}^\ep(s)-\ovl{\pmb{\theta}}(s)\|_2ds,
\end{align*}
where the initial conditions cancel. Let us denote
\begin{align*}
\Xi(\ep,\delta,t):= \sum_{k=0}^{[t/\delta]}\big|\delta^{-1}\int_{k\delta}^{(k+1)\delta}\big[b_i(\pmb{\theta}^\ep(k\delta),\bX^\ep(s))-\ovl{b}_i(\pmb{\theta}^\ep(k\delta))\big]ds\big|,
\end{align*}
which is nondecreasing in $t$. It follows that
\[
\sup_{t\in[0,T]}\|\pmb{\theta}^\ep(t)-\ovl{\pmb{\theta}}(t)\|_2\le 
\Xi(\ep,\delta,T)\delta+CT\delta+C\int_{0}^T\sup_{s\in[0,t]}\|\pmb{\theta}^\ep(s)-\ovl{\pmb{\theta}}(s)\|_2dt.
\]
Taking expectation and by Gronwall's lemma, we have that 
\begin{align}\label{gronwall}
\E\big[\sup_{t\in[0,T]}\|\pmb{\theta}^\ep(t)-\ovl{\pmb{\theta}}(t)\|_2\big]\le \big(\E\big[\Xi(\ep,\delta,T)\big]+CT\big)\delta e^{CT}.
\end{align}
Upon conditioning at each $\cF_{k\delta}$ of the canonical filtration $\cF_t:=\sigma\{\bX^\ep(s), 0\le s\le t\}$, with $\pmb{\theta}^\ep(k\delta)$ measurable with respect to $\cF_{k\delta}$, by Markov property and \eqref{unif-conv-b} we have that
\begin{align*}
&\E\Big[\big|\delta^{-1}\int_{k\delta}^{(k+1)\delta}\big[b_i(\pmb{\theta}^\ep(k\delta),\bX^\ep(s))-\ovl{b}_i(\pmb{\theta}^\ep(k\delta))\big]ds\big|\; \Big|\; \cF_{k\delta}\Big]\\
&\le \max_{\bx\in V^N}\sup_{\pmb{\theta}\in\T^N}\E^\bx\big|\delta^{-1}\int_0^\delta\big[b_i(\pmb{\theta}, \wt\bX^\ep(s))-\ovl{b}_i(\pmb{\theta})\big]ds\big|\quad \to 0,\quad \text{ as }\ep\to0,
\end{align*}
where $\E^\bx$ indicates conditioning on $\wt\bX^\ep(0)=\bx$, for $\{\wt\bX^\ep(t)\}$ process having the same law as $\bX^\ep(t)$ (to avoid confusion). We see from the \abbr{RHS} that the rate of convergence can be taken universal, independent of $\cF_{k\delta}$ and hence of the initial condition $\pmb{\theta}^\ep(0)=\pmb{\alpha}$. 

Applying the tower property and dominated convergence theorem, we further have 
\[
\E\big|\delta^{-1}\int_{k\delta}^{(k+1)\delta}\big[b_i(\pmb{\theta}^\ep(k\delta),\bX^\ep(s))-\ovl{b}_i(\pmb{\theta}^\ep(k\delta))\big]ds\big|\to0, \quad \text{ as }\ep\to0
\]
with a universal rate of convergence, and upon summing over $k=0, 1,...,[T/\delta]-1$, we get
\begin{align*}
\lim_{\ep\to0}\sup_{\pmb{\alpha}\in\T^N}\E\big[\Xi(\ep,\delta,T)\big]=0.
\end{align*}
Thus, we deduce from \eqref{gronwall} that
\begin{align*}
\limsup_{\ep\to0}\sup_{\pmb{\alpha}\in\T^N}\E\big[\sup_{t\in[0,T]}\|\pmb{\theta}^\ep(t)-\ovl{\pmb{\theta}}(t)\|_2\big]\le CT\delta e^{CT}.
\end{align*}
Since the \abbr{LHS} does not depend on $\delta$, taking $\delta\to0$ we get our thesis for $\ell^2$-norm. Since $\ell^\infty$-norm is bounded above by $\ell^2$-norm in finite dimension $N$, the thesis is proved.
\end{proof}

\section{Results that hold for small $\ep>0$}\label{sec.global}

In this section, we use the averaging principle in Section \ref{sec.aver} and the result on synchronization of Section \ref{sec.anyep} to prove Theorem \ref{thm-main}:


\begin{proof}[Proof of Theorem \ref{thm-main}]
By Remark \ref{rmk:ave-km}, the averaged dynamics \eqref{ave-km} has a basin of attraction $\Omega$ for $\mathbf{0}$ (up to a global translation) of full Lebesgue measure on $\T^N$. Consider any compact subset $\Omega'\subset\joinrel\subset\Omega$. By compactness of $\Omega'$, the continuity with respect to initial condition of the deterministic \abbr{ODE} \eqref{ave-km}, and the fact that $\ovl{\pmb{\theta}^{\pmb{\alpha}}}(t) \to \mathbf{0}$ for every $\pmb{\alpha} \in \Omega '$, there exists some finite, deterministic $t_0=t_0(\Omega', \ovl{a}, N)$ such that for any initial condition $\ovl{\pmb{\theta}^{\pmb{\alpha}}}(0)=\pmb{\alpha}\in\Omega'$ and all $t\ge t_0$,
\[
\ovl{\pmb{\theta}^{\pmb{\alpha}}}(t)\in \Delta (\pi/6).
\]
Take $T=2t_0$. By Proposition \ref{prop:ave} and Markov's inequality, for any $\eta>0$, there exists $\ep_0=\ep_0(t_0,\eta)\in(0,1)$ such that for any $\ep\in(0,\ep_0)$, we have
\begin{align*}
\sup_{\pmb{\alpha}\in\Omega'}\P\Big(\sup_{t\in[0,T]}\|\pmb{\theta}^{\ep,\pmb{\alpha}}(t)-\ovl{\pmb{\theta}^{\pmb{\alpha}}}(t)\|_\infty> \frac{\pi}{12}\Big)<\eta,
\end{align*}
where $\pmb{\theta}^{\ep,\pmb{\alpha}}(0)=\pmb{\alpha}$.

It follows by the triangle inequality that for any $\ep\in(0,\ep_0)$ and $\pmb{\alpha}\in\Omega'$, with probability at least $1-\eta$ we have 
\[
\pmb{\theta}^{\ep,\pmb{\alpha}}(t)\in\Delta(\pi/3)
\]
for all $t\in(t_0,T]$. By Theorem \ref{thm:basin}, $\Delta(\pi/3)$ is a basin of attraction for $\mathbf{0}$ (up to a global translation) for the \abbr{KRM} \eqref{dyn-km}, $\P$-a.s. for any $\ep$. Hence, $\pmb{\theta}^{\ep,\pmb{\alpha}}(t) \to \mathbf{0}$ as $t\to\infty$, with probability at least $1-\eta$.
\end{proof}

\medskip
\textbf{Acknowledgments.} We thank Franco Flandoli and Gabriel Mindlin for valuable discussions on random dynamical systems and synchronization respectively. We also thank Steven Strogatz, who pointed us to the problem treated in this manuscript and to the references \cite{ling2019landscape, Faggian2019, Multi_Review}.

Pablo Groisman and Hern\'an Vivas are partially supported by CONICET grant PIP 2021 11220200102825CO. 

Pablo Groisman is partially supported by UBACyT grant 20020190100293BA.

R.H. was supported in part by a CRM-Pisa Junior Visiting Position 2020-22, and by the Deutsche Forschungsgemeinschaft (DFG, German Research Foundation) under Germany's Excellence Strategy EXC 2044-390685587, Mathematics M\"unster: Dynamics-Geometry-Structure.



\end{document}